\numberwithin{equation}{section}
\newtheorem{theo}{Theorem} %[section]
\newtheorem{lem}{Lemma}
\newtheorem{mcor}{Corollary}
\newtheorem{remark}{Remark}
\newtheorem{prop}{Proposition}
\newtheorem{definition}{Definition}
\newcommand*{\tr}{\mathrm{tr}}
\newcommand*{\D}[1]{\ensuremath{\nabla^{#1}}}
\newcommand*{\tD}[1]{\ensuremath{\widetilde{\nabla}^{#1}}}
\begin{document}
\title{Adiabatic limit and connections in Finsler Geometry }
\author[Huitao Feng]{Huitao Feng$^1$}
\author[Ming Li]{Ming Li$^2$}

\address{Huitao Feng: School of Mathematics and Statistics,
 Chongqing University of Technology,
Chongqing 400054, People's Republic of China }

\email{fht@cqut.edu.cn}

\address{Ming Li: School of Mathematics and Statistics,
 Chongqing University of Technology,
Chongqing 400054, People's Republic of China }

\email{mingli@cqut.edu.cn}

\thanks{$^1$~Partially supported by NSFC (Grant No. 10921061), Fok Ying Tong
Education Foundation, Chongqing NSF and the Scientific research project of
CQUT}

\thanks{$^2$~Partially supported by Chongqing NSF (Grant No. cstc2011jjA00026)
and the Scientific research project of
CQUT (Grant No. 01-60-37)}

%\date{\today}

\date{}  % 这一行用来去掉默认的日期显示
\maketitle

\begin{abstract}
In this paper, we identify the Bott connection on the natural
foliation of the projective sphere bundle of a Finsler manifold to
the Chern connection of this manifold. As a consequence,
the symmetrization of the Bott connection turns out to be the Cartan
connection of the Finsler manifold. Following Liu-Zhang
\cite{LiuZ}, the Cartan connection can also be obtained through an
adiabatic limit process.
Furthermore, a Chern-Simons type form is defined and its conformal
properties are discussed.

\begin{flushleft}
Keywords:  Bott connection,\quad  Chern connection,\quad  Cartan
connection,\quad adiabatic limit,\quad Chern-Simons type form
\end{flushleft}
\end{abstract}

\section*{Introduction}

In Finsler geometry, the Chern connection and the Cartan connection
are two basic connections which have remarkable  properties. Let
$(M,F)$ be a Finsler manifold. Let $\pi:SM\rightarrow M$ be the
projective sphere bundle of $M$. Then the Finsler structure $F$ on
$M$ defines naturally an Euclidean structure on the pull-back vector
bundle $\pi^*TM\rightarrow SM$ and a Sasaki-type Riemannian metric
on $SM$. The Chern and Cartan connections are connections on
$\pi^*TM$ and defined from different geometric reasons.

On the other hand, the Finsler structure $F$ gives rise to a natural
splitting of $T(SM)$. One part is the vertical tangent bundle
$V(SM)$ formed by the tangent vectors of the (vertical) projective
spheres, which is an integrable subbundle of $T(SM)$. Another part
is the horizontal tangent bundle $H(SM)$, which is defined as the
orthogonal complement of $V(SM)$ in $T(SM)$ with respect to the
Sasaki-type Riemannian metric on $SM$. It is well-known that $H(SM)$
with its restriction metric is isometric to $\pi^*TM$.

In this paper, we consider $SM$ as a foliation foliated by
projective spheres. So the well-known Bott connection in foliation
theory is now a connection on $H(SM)$. We will prove that the Bott
connection is the Chern connection under the identification of
$H(SM)$ and $\pi^*TM$. As a consequence, the symmetrization of the
Bott connection turns out to be the Cartan connection. These also
partially answer a question of M. Abate and G. Patrizio (cf. [1,
p.29]). Following Liu-Zhang \cite{LiuZ}, the relations between the
Chern connection, the Cartan connection and the Levi-Civita
connection associated to the Sasaki-type Riemannian metric are also
established through an adiabatic limit process.

We then consider a special Chern-Simons transgressed term of the Chern and
Cartan connections.  In the case of dimension 2, the explicit
formula of this term is given. Inspired by this formula, we define a
Chern-Simons type form of $(M,F)$, which is a non-Riemannian
geometric invariant of the Finsler manifold. Some conformal
properties of this form are also discussed.

This paper is organized as follows. In Section 1, we give a review
of some basic facts in Finsler geometry.
In Section 2, we study the relations between the Bott connection,
the Chern connection and the Cartan connection for a Finsler manifold.
In Section 3, we define a Chern-Simons type form of a Finsler manifold and discuss its
conformal properties.

\

\

{\bf Acknowledgements.} The first author would like to thank
Professor Weiping Zhang for his consistent support and
encouragement. The authors thank Professors Kefeng Liu and Weiping
Zhang for their many helpful suggestions in preparing this paper.

\section{Finsler manifolds and related structures}
In this section we give a review of some basic facts in Finsler
geometry which will be used in this paper.

Let $M$ be an $n$ dimensional smooth manifold and $\pi:TM\to M$
the tangent bundle of $M$. Let $(U;x=(x^1,x^2,\ldots,x^n))$ be a
local coordinate system on an open subset $U$ of $M$. Then by the
standard procedure one gets a local coordinate system
$(x,y)=(x^1,\ldots,x^n,y^1,\ldots,y^n)$ on $\pi^{-1}(U)$. Set
$TM_0=TM\setminus0$, where $0$ denotes the zero section of $TM$.
Then $(x,y)$ with $y\neq 0$ is a local coordinate system on $TM_0$.
\begin{definition}\label{definition of Finsler manifolds}
A Finsler structure on $M$ is a smooth function
$F:TM_0\rightarrow\mathbb{R}^+$, which satisfies the following conditions:

{\upshape(i)} $F(x,\lambda y)=\lambda F(x,y)$, $\forall (x,y)\in TM_0$, and
$\lambda\in\mathbb{R}^+$;

{\upshape(ii)} The $n\times n$ Hessian matrix
\begin{align*}
(g_{ij})=\left(\frac{1}{2}\left[F^2\right]_{y^iy^j}\right)
\end{align*}
is positive-definite at every point of $TM_0$. A manifold $M$ with a
Finsler structure $F$ is called a Finsler manifold, and denoted by
$(M,F)$.
\end{definition}

In this paper, lower case Latin indices will run from 1 to $n$ and
lower case Greek indices will run from 1 to $n-1$. We also adopt the
summation convention of Einstein.

Let $(M,F)$ be an $n$-dimensional Finsler manifold. Set
\begin{align}
G^{i}=\frac{1}{4}g^{ij}\left(\left[F^2\right]_{y^{j}x^{k}}y^{k}
-\left[F^2\right]_{x^{j}}\right),\label{geodesic coeffi.}
\end{align}
\begin{align}
\frac{\delta}{\delta x^i}=\frac{\partial}{\partial
x^{i}}-\frac{\partial G^{j}}{\partial y^i}\frac{\partial}{\partial
y^j},\quad \frac{\delta}{\delta y^i}=F\frac{\partial}{\partial
y^{i}},\label{Gii}
\end{align}
where $(g^{ij})=(g_{ij})^{-1}$. Clearly, the vectors
\begin{align}
\left\{\frac{\delta}{\delta x^1},\frac{\delta}{\delta x^2},\ldots,
\frac{\delta}{\delta x^n}, \frac{\delta}{\delta
y^1},\frac{\delta}{\delta y^2},\ldots, \frac{\delta}{\delta
y^n}\right\}\label{basis of TM_0}
\end{align}
form a local tangent frame of $TM_0$. For another
local coordinate system $(U;{\tilde x})$ on $M$, a routine computation shows that
\begin{align}
{{\delta}\over{\delta x^i}}={{\partial{\tilde x}^j}\over{\partial
x^i}}{{\delta}\over{\delta{\tilde x}^j}},\quad{{\delta}\over{\delta
y^i}}={{\partial{\tilde x}^j}\over{\partial
x^i}}{{\delta}\over{\delta{\tilde y}^j}}. \label{delta x delta y}
\end{align}
Now by (\ref{delta x delta y}), one gets a well-defined linear map $J:T(TM_0)\to T(TM_0)$
\begin{align}
J\left(\frac{\delta}{\delta x^i}\right)=\frac{\delta}{\delta
y^i},\quad J\left(\frac{\delta}{\delta
y^i}\right)=-\frac{\delta}{\delta x^i},\label{JJ}
\end{align}
which is in fact an almost complex structure on $TM_0$. Let
\begin{align}
\left\{\delta x^1,\delta x^2,\ldots,\delta x^n,\delta y^1,\delta
y^2,\ldots,\delta y^n\right\}\label{JJJ}
\end{align}
be the dual frame of (\ref{basis of TM_0}). One has
\begin{align}
\delta x^i=dx^i,\quad \delta y^i={1\over
F}\left(dy^{i}+\frac{\partial G^{i}}{\partial
y^j}dx^{j}\right),\label{JiJi}
\end{align}
and
\begin{align}
J^*(\delta x^i)=-\delta y^i,\quad J^*(\delta y^i)=\delta
x^i,\label{JiJ}
\end{align}
where $J^*$ denotes the dual map of $J$.

Let $\pi:SM=TM_0/\mathbb{R}^+\to M$ denote the projective sphere bundle.
Now the fundamental tensor $g=g_{ij}dx^i\otimes dx^j$ defines an Euclidean metric on
the pull back bundle $\pi^*TM$ over $SM$. Note
that $\pi^*TM$ admits a distinguished global section
$l:SM\to\pi^{\ast}TM$, which is defined by
\begin{align}
l(x,[y])=\left(x,[y],\frac{y}{F(x,y)}\right).\label{l1}
\end{align}

For any local orthonormal frame $\left\{e_1,\ldots,e_n\right\}$ of
$(\pi^*TM,g)$ with $e_n=l$, let $\{\omega^{1},\cdots,\omega^{n}\}$
be the dual frame. Clearly, $\omega^i$'s can be also viewed
naturally as (local) one forms on $SM$ as well as on $TM_0$. Here $\omega^n$,
the so called Hilbert form, is a globally defined one form and $\omega^n=F_{y^i}\delta x^i$. Set
\begin{align}
\omega^{n+i}=J^*(\omega^i),\quad i=1,2,\ldots, n.\label{200}
\end{align}
The one forms $\omega^1,\omega^2,\ldots,\omega^{2n-1}$ and
$\omega^{2n}=-F_{y^i}\delta y^i$ give rise to a local coframe of
$TM_0$. Moreover, one verifies easily that the forms
$\omega^{n+\alpha}$, $\alpha=1,2,\ldots,n-1$, are actually the one
forms on $SM$ (cf. [6, p.269]) and the set
\begin{align}
\theta=\left\{\omega^1,\ldots,\omega^n,\omega^{n+1},\ldots,\omega^{2n-1}\right\}
                                                         \label{total cobasis of SM}
\end{align}
forms a local coframe of $SM$. By using the local coframe (\ref{total cobasis of SM}),
the tensor
\begin{align}
g^{T(SM)}=\sum_{i=1}^n\omega^i\otimes\omega^i+\sum_{\alpha=1}^{n-1}\omega^{n+\alpha}\otimes\omega^{n+\alpha}
\end{align}
gives a well-defined Riemannian metric on $SM$, which is called
the Sasaki-type Riemannian metric on $SM$.
Moreover, the fundamental tensor $g$ can be written as
\begin{align}
g=\sum_{i,j=1}^ng_{ij}dx^i\otimes dx^j=\sum_{i,j=1}^ng_{ij}{\delta x^i}\otimes{\delta x^j}
=\sum_{i=1}^n\omega^i\otimes\omega^i\quad\mbox{\rm on}\quad SM.
\label{gij}
\end{align}

As mentioned in the introduction of this paper, the vertical and
horizontal subbundles $V(SM)$ and $H(SM)$ of $T(SM)$ are orthogonal
to each other with respect to $g^{T(SM)}$. Let
$\{\mathbf{e}_1,\ldots,\mathbf{e}_n,\mathbf{e}_{n+1},\ldots,\mathbf{e}_{2n-1}\}$
denote the dual frame of $\theta$. Note that
\begin{align}
\{\mathbf{e}_1,\mathbf{e}_2,\ldots,\mathbf{e}_{n-1},\mathbf{e}_n\}\label{mathbf e}
\end{align}
is a local orthonormal frame of $H(SM)$.
\begin{remark}
$(\pi^*TM,g)$ can be identified with $H(SM)$ with the restricting
metric of $g^{T(SM)}$ as Euclidean bundles. In fact, this
identification is given by identifying $\frac{\partial}{\partial
x^i}$ with $\frac{\delta}{\delta x^i}$ and so $e_i$ with
$\mathbf{e}_i$. In particular, the distinguished section
$l=e_n={{y^i}\over F}{\partial\over{\partial x^i}}$ in (\ref{l1})
turns out to be the Reeb vector field ${\bf
G}={\mathbf{e}_n}={{y^i}\over F}{\delta\over{\delta x^i}}$ of
$(M,F)$ on $SM$.
\end{remark}
Write that
\begin{align}
\omega^j=v^{j}_i\delta x^i,\quad{\rm and\quad so}\quad\omega^{n+\alpha}=J^*(v^{\alpha}_i\delta
x^i)=-v^{\alpha}_i\delta y^i.\label{omega to dx}
\end{align}
Then one has
\begin{align}
\mathbf{e}_i=u_{i}^j\frac{\delta}{\delta x^j}\quad{\rm and}\quad\mathbf{e}_{n+\alpha}
=-u_{\alpha}^j\frac{\delta}{\delta y^j},\label{ei to delta delta x}
\end{align}
where $(u_{j}^i)=(v^{j}_i)^{-1}$. Here also note that $v^{n}_i=F_{y^i}$
and $u^i_n=\frac{y^i}{F}$. By Definition \ref{definition of Finsler manifolds},
one gets easily that
\begin{align}
\sum_{\alpha=1}^{n-1}v^{\alpha}_iv^{\alpha}_j=FF_{y^iy^j},\quad FF_{y^i}g^{ij}=y^j.
\label{500}
\end{align}

The following lemma gives an explicit expression of the exterior
derivative of the Hilbert form $\omega^n$ with respect to
the local coframe (\ref{total cobasis of SM}).
This formula is usually obtained as one of the structure equations
of the Chern connection in Finsler geometry.
\begin{lem}\label{differential of Hilbert form}
The exterior derivative of Hilbert form is given by
\begin{align}
d\omega^n=\sum_{\alpha=1}^{n-1}\omega^{\alpha}\wedge\omega^{n+\alpha}.\label{d omega^n}
\end{align}
\end{lem}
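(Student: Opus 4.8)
The plan is to compute $d\omega^n$ directly from the explicit formula $\omega^n = F_{y^i}\,\delta x^i$ given in the excerpt, using the fact that $\delta x^i = dx^i$ from \eqref{JiJi}. Since $\omega^n$ is a globally defined one-form on $SM$, I can work in local coordinates and then reassemble the result in terms of the coframe $\theta$. The starting point is
\begin{align*}
d\omega^n = d\bigl(F_{y^i}\,dx^i\bigr) = dF_{y^i}\wedge dx^i.
\end{align*}
I would expand $dF_{y^i}$ using the basis of $T(TM_0)$ dual to \eqref{basis of TM_0}; that is, write $dF_{y^i} = \tfrac{\delta F_{y^i}}{\delta x^j}\,\delta x^j + \bigl(\tfrac{\partial F_{y^i}}{\partial y^j}\cdot F\bigr)\,\tfrac{1}{F}\delta y^j$-type terms, being careful that the natural dual frame to $\bigl\{\tfrac{\delta}{\delta x^j},\tfrac{\delta}{\delta y^j}\bigr\}$ is $\{\delta x^j,\delta y^j\}$ as in \eqref{JJJ}.

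The key simplification is that the $\delta x^j\wedge dx^i$ (i.e.\ $\delta x^j \wedge \delta x^i$) terms should cancel by a symmetry argument: the horizontal-horizontal part of $dF_{y^i}\wedge\delta x^i$ involves $\tfrac{\delta F_{y^i}}{\delta x^j}$, which one expects to be symmetric in $i,j$ (this is essentially the torsion-freeness encoded in the geodesic coefficients $G^i$), so its wedge against the antisymmetric $\delta x^j\wedge\delta x^i$ vanishes. That leaves only the mixed terms coming from the vertical derivative $F_{y^iy^j}$. Concretely, the surviving piece is
\begin{align*}
\frac{\partial F_{y^i}}{\partial y^j}\,dy^j\wedge dx^i = F_{y^iy^j}\,dy^j\wedge dx^i,
\end{align*}
which I then re-express in terms of $\delta y^j$ using \eqref{JiJi}; the extra $dx$-terms introduced by converting $dy^j$ to $\delta y^j$ feed back into the horizontal-horizontal part and must be checked to cancel as well, again by the symmetry of $F_{y^iy^j}$.

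The final step is to convert $F F_{y^iy^j}$ into the coframe $\theta$. Here I would invoke the first identity of \eqref{500}, namely $\sum_{\alpha=1}^{n-1} v^\alpha_i v^\alpha_j = F F_{y^iy^j}$, together with $\omega^\alpha = v^\alpha_i\,\delta x^i$ and $\omega^{n+\alpha} = -v^\alpha_i\,\delta y^i$ from \eqref{omega to dx}. Substituting these should collapse the sum $F_{y^iy^j}\,\delta x^i\wedge\delta y^j$ (after the factor of $F$ is absorbed via $\delta y^j = F\,\tfrac{\partial}{\partial y^j}$-scaling) into $\sum_{\alpha=1}^{n-1}\omega^\alpha\wedge\omega^{n+\alpha}$, with the range of $\alpha$ running only to $n-1$ precisely because $v^n_i = F_{y^i}$ contributes a term proportional to $\omega^n\wedge\omega^{2n}$ that lives outside $SM$ and is discarded upon restriction. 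I expect the main obstacle to be the careful bookkeeping when passing between the coordinate frame $\{dx^i, dy^j\}$ and the adapted frame $\{\delta x^i,\delta y^j\}$: one must track the $\tfrac{\partial G^i}{\partial y^j}$ correction terms from \eqref{JiJi} and verify that the horizontal--horizontal components genuinely cancel, which relies on the symmetry $\tfrac{\delta F_{y^i}}{\delta x^j} = \tfrac{\delta F_{y^j}}{\delta x^i}$ rather than on any more elaborate structure equation.
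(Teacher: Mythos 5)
Your computational strategy is the same as the paper's: expand $d(F_{y^i}\,dx^i)$, pass to the adapted coframe $\{\delta x^i,\delta y^i\}$, argue that the horizontal--horizontal part vanishes, and convert the surviving piece $FF_{y^iy^j}\,\delta y^j\wedge\delta x^i$ into $\sum_{\alpha}\omega^{\alpha}\wedge\omega^{n+\alpha}$ via \eqref{omega to dx} and \eqref{500}. That last conversion is fine as you describe it, except that nothing is ``discarded upon restriction'': the identity $\sum_{\alpha=1}^{n-1}v^{\alpha}_iv^{\alpha}_j=FF_{y^iy^j}$ is exact, the missing $\alpha=n$ term being precisely $g_{ij}-FF_{y^iy^j}=F_{y^i}F_{y^j}$.

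The genuine gap is in the one step that carries all the content. You assert that $\frac{\delta F_{y^i}}{\delta x^j}$ is symmetric in $i,j$, attribute this to ``torsion-freeness encoded in the geodesic coefficients,'' and explicitly claim it does not rest on any further structure identity. But write
\begin{equation*}
\frac{\delta F_{y^i}}{\delta x^j}
=F_{y^ix^j}-\frac{\partial G^k}{\partial y^j}F_{y^iy^k}
=\left(\frac{\delta F}{\delta x^j}\right)_{y^i}
+\frac{\partial^2 G^k}{\partial y^i\partial y^j}\,F_{y^k}.
\end{equation*}
Only the second summand is symmetric for free (that is where the symmetry of $\partial^2 G^k/\partial y^i\partial y^j$ enters); the antisymmetric part of $\frac{\delta F_{y^i}}{\delta x^j}$ equals $\bigl(\frac{\delta F}{\delta x^j}\bigr)_{y^i}-\bigl(\frac{\delta F}{\delta x^i}\bigr)_{y^j}$, and the only reason this vanishes is the identity $\frac{\delta F}{\delta x^j}=F_{x^j}-\frac{\partial G^k}{\partial y^j}F_{y^k}=0$, i.e.\ \eqref{F is horizontal}, which depends on the specific form \eqref{geodesic coeffi.} of the $G^i$ and is exactly the nontrivial input the paper cites ([3, p.36]). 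So the symmetry you ``expect'' is true, but it is equivalent to --- not independent of --- the structure identity you were trying to avoid; without proving or invoking $\frac{\delta F}{\delta x^j}=0$, the horizontal--horizontal cancellation is unestablished and the proof is incomplete at its crux.
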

\begin{proof}
Note that
\begin{align*}
d\omega^n&=d(F_{y^i}\delta x^i)=F_{y^ix^j}\delta x^j\wedge \delta x^i+F_{y^i y^j}dy^j\wedge \delta x^i\\
&=FF_{y^i y^j}\delta y^j\wedge \delta x^i+\left(F_{y^ix^j}-\frac{\partial G^k}{\partial y^j}F_{y^iy^k}\right)\delta x^j\wedge \delta x^i\\
&=FF_{y^i y^j}\delta y^j\wedge \delta x^i+\left(\frac{\delta F}{\delta x^j}\right)_{y^i}\delta x^j\wedge \delta x^i+\frac{\partial^2G^k}
{\partial y^j\partial y^i}F_{y^k}\delta x^j\wedge \delta x^i.
\end{align*}
By (\ref{omega to dx}) and (\ref{500}), the term $FF_{y^iy^j}\delta
y^j\wedge \delta x^i
=\sum_{\alpha=1}^{n-1}\omega^{\alpha}\wedge\omega^{n+\alpha}$.
Clearly, $\frac{\partial^2G^k} {\partial y^j\partial
y^i}F_{y^k}\delta x^j\wedge \delta x^i=0$. Now the lemma follows
from the following result (cf. [3, p.36]),
\begin{align}
\frac{\delta F}{\delta x^j}=F_{x^j}-\frac{\partial G^k}{\partial y^j}F_{y^k}=0. \label{F is horizontal}
\end{align}
\end{proof}

The following lemma is actually obtained by Mo in \cite{Mo2011}. We
will give it a direct proof without using any concepts of
connections.
\begin{lem}[Mo, \cite{Mo2011}]\label{lemma 2}
The Lie derivative of the fundamental tensor $g$ along the Reeb
vector field $\mathbf{G}$ (cf. {\rm Remark 1}.) is given by
\begin{align}
\mathcal{L}_{\mathbf{G}}g
=-\sum_{\alpha=1}^{n-1}\left(\omega^{\alpha}\otimes\omega^{n+\alpha}
+\omega^{n+\alpha}\otimes\omega^{\alpha}\right).
                                                                      \label{Lie direvative of g}
\end{align}
\end{lem}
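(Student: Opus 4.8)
The plan is to compute the Lie derivative directly from the coordinate expression $g=\sum_{i,j}g_{ij}\,\delta x^i\otimes\delta x^j$ in (\ref{gij}), using that $\delta x^i=dx^i$. By the Leibniz rule,
\begin{align*}
\mathcal{L}_{\mathbf{G}}g=(\mathbf{G}g_{ij})\,dx^i\otimes dx^j+g_{ij}\big((\mathcal{L}_{\mathbf{G}}dx^i)\otimes dx^j+dx^i\otimes(\mathcal{L}_{\mathbf{G}}dx^j)\big),
\end{align*}
so the whole computation reduces to finding $\mathcal{L}_{\mathbf{G}}dx^i$ and the derivative $\mathbf{G}g_{ij}$ along the Reeb field $\mathbf{G}=\frac{y^k}{F}\frac{\delta}{\delta x^k}$.

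Next I would compute $\mathcal{L}_{\mathbf{G}}dx^i$ by Cartan's formula. Since $dx^i$ is closed and $\iota_{\mathbf{G}}dx^i=\frac{y^i}{F}$, one gets $\mathcal{L}_{\mathbf{G}}dx^i=d(y^i/F)$. Expanding $d(y^i/F)$ in the frame $\{\delta x^j,\delta y^j\}$ and using the horizontality $\frac{\delta F}{\delta x^j}=0$ of (\ref{F is horizontal}), the terms proportional to $y^i$ collapse and the radial form $\omega^{2n}=-F_{y^j}\delta y^j$ drops out, leaving
\begin{align*}
\mathcal{L}_{\mathbf{G}}dx^i=d\!\left(\frac{y^i}{F}\right)=-\,u^i_{\alpha}\,\omega^{n+\alpha}-\frac1F\frac{\partial G^i}{\partial y^j}\,\delta x^j.
\end{align*}
Contracting the first piece with $g_{ij}$ and using $g_{ij}u^i_{\alpha}=v^{\alpha}_j$ (a consequence of (\ref{gij}) and (\ref{omega to dx})) together with $\omega^{\alpha}=v^{\alpha}_j\delta x^j$, the $\omega^{n+\alpha}$ terms assemble into exactly $-\sum_{\alpha}(\omega^{\alpha}\otimes\omega^{n+\alpha}+\omega^{n+\alpha}\otimes\omega^{\alpha})$, which is the claimed right-hand side.

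It then remains to show that all the remaining purely horizontal terms cancel, i.e. that
\begin{align*}
\mathbf{G}g_{ij}=\frac1F\left(g_{il}\frac{\partial G^l}{\partial y^j}+g_{jl}\frac{\partial G^l}{\partial y^i}\right),\qquad\text{equivalently}\qquad y^k\frac{\delta g_{ij}}{\delta x^k}=g_{il}\frac{\partial G^l}{\partial y^j}+g_{jl}\frac{\partial G^l}{\partial y^i}.
\end{align*}
I would prove this identity by differentiating the defining relation $4g_{il}G^l=[F^2]_{y^ix^k}y^k-[F^2]_{x^i}$, which follows from (\ref{geodesic coeffi.}), with respect to $y^j$ and then symmetrizing in $i$ and $j$. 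In the symmetrized sum the mixed second derivatives $[F^2]_{y^ix^j}$ and $[F^2]_{x^iy^j}$ cancel by equality of mixed partials; the homogeneity relation $y^k\frac{\partial G^l}{\partial y^k}=2G^l$ turns $y^k\frac{\delta}{\delta x^k}$ into the geodesic spray; and the full symmetry of $[F^2]_{y^iy^jy^k}$ matches the remaining terms, yielding the identity.

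The main obstacle is exactly this last identity: it is the statement that the nonlinear connection $\frac{\partial G^l}{\partial y^j}$ is compatible with $g$ along the spray, a fact normally read off from the metric compatibility of the Chern connection, and the point here is to obtain it by the above bare-hands differentiation without invoking any connection. A secondary technical care is the bookkeeping on $TM_0$: one must track the radial form $\omega^{2n}$ throughout and verify it cancels, confirming that $\mathcal{L}_{\mathbf{G}}g$ descends to a well-defined tensor on $SM$.
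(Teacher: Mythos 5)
Your proposal is correct and follows essentially the same route as the paper's own proof: Leibniz rule plus Cartan's formula for $\mathcal{L}_{\mathbf{G}}dx^i$, with everything hinging on the identity $y^k\frac{\delta g_{ij}}{\delta x^k}=g_{il}\frac{\partial G^l}{\partial y^j}+g_{jl}\frac{\partial G^l}{\partial y^i}$ and on the cancellation of the radial form $\omega^{2n}$. The only (harmless) variation is that you derive this identity by differentiating the defining relation $4g_{il}G^l=[F^2]_{y^ix^k}y^k-[F^2]_{x^i}$ and symmetrizing, whereas the paper obtains it from the horizontality $\frac{\delta F}{\delta x^j}=0$ of (\ref{F is horizontal}) combined with the homogeneity of $G^i$; both derivations are elementary and connection-free.
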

\begin{proof}
Firstly one has
\begin{align*}
\mathbf{G}(g_{ij})&=\frac{y^k}{F}\frac{\delta}{\delta
x^k}\left({\frac{1}{2}[F^2]_{y^iy^j}}\right)
=\frac{1}{2}\frac{y^k}{F}\left(\frac{\partial}{\partial
x^k}{[F^2]_{y^iy^j}}-\frac{\partial G^l}{\partial
y^k}\frac{\partial}{\partial y^l}{[F^2]_{y^iy^j}}\right)\\
&=\frac{1}{2}\frac{y^k}{F}\left(\frac{\delta[F]^2}{\delta
x^k}\right)_{y^iy^j} +{1\over F}\left(g_{lj}\frac{\partial
G^l}{\partial y^i}+g_{li}\frac{\partial
G^l}{\partial y^j}\right)\\
&=\frac{1}{F}\left(g_{lj}\frac{\partial G^l}{\partial
y^i}+g_{li}\frac{\partial G^l}{\partial y^j}\right).
\end{align*}
Then by (\ref{gij}) and Cartan homotopy formula (cf. [11, p.30]),
one has
\begin{align*}
&\mathcal{L}_{\mathbf{G}}g=\mathcal{L}_{\mathbf{G}}(g_{ij}dx^i\otimes
dx^j)\\
&=\mathbf{G}(g_{ij})dx^i\otimes
dx^j+g_{ij}\mathcal{L}_{\mathbf{G}}(dx^i)\otimes
dx^j+g_{ij}dx^i\otimes \mathcal{L}_{\mathbf{G}}(dx^j)\\
&=\frac{1}{F}\left(g_{lj}\frac{\partial G^l}{\partial
y^i}+g_{li}\frac{\partial G^l}{\partial y^j}\right)dx^i\otimes
dx^j\\
&\quad+g_{ij}\frac{dy^i}{F}\otimes
dx^j-g_{ij}\frac{y^i}{F}d\log F\otimes dx^j+g_{ij}dx^i\otimes \frac{dy^j}{F}-g_{ij}dx^i\otimes\frac{y^j}{F}d\log F \\
&=g_{ij}\delta y^i\otimes dx^j+g_{ij} dx^i\otimes\delta y^j-d\log
F\otimes F_{y^j}dx^j-F_{y^i}dx^i\otimes d\log F\\
&=-\sum_{i=1}^n\omega^i\otimes\omega^{n+i}-\sum_{i=1}^n\omega^{n+i}\otimes\omega^{i}-d\log
F\otimes \omega^n-\omega^n\otimes d\log F\\
&=-\sum_{\alpha=1}^{n-1}\left(\omega^{\alpha}\otimes\omega^{n+\alpha}+\omega^{n+\alpha}\otimes\omega^{\alpha}\right).
\end{align*}
The last equation comes from that $\omega^{2n}=-d\log F$, a direct
corollary of (\ref{F is horizontal}).

\end{proof}

\begin{remark}
We denote the Hilbert form as $\omega=\omega^n$. By Lemma
\ref{differential of Hilbert form}, one has that
$\omega\wedge(d\omega)^{n-1}\neq0$. So $\omega$ is a contact form of
$SM$.
\end{remark}

\section{The relations of some connections related to a Finsler manifold}

In this section we will use the same notations as in Section 1. Note
that there exists a natural foliation structure on the Riemannian
manifold $(SM,g^{T(SM)})$, which is foliated by the vertical bundle
$V(SM)$. Following Liu-Zhang \cite{LiuZ} and Zhang [11, Sect. 1.7],
set
\begin{align}
\mathcal{F}=V(SM),\quad\mathcal{F}^{\bot}=H(SM).\label{00}
\end{align}
Let $\nabla^{T(SM)}$ be the Levi-Civita connection on $T(SM)$
associated to the Sasaki-type Riemannian metric $g^{T(SM)}$ on $SM$.
Let $p$, $p^{\bot}$ denote the orthogonal projections from $T(SM)$ to $\mathcal {F}$, $\mathcal {F}^{\bot}$ respectively. Set
\begin{align}
\nabla^{\mathcal {F}}=p\nabla^{T(SM)}p,\quad \nabla^{{\mathcal
{F}}^{\bot}}=p^{\bot}\nabla^{T(SM)}p^{\bot}.\label{000}
\end{align}
Let $g^{\mathcal{F}}$, $g^{{\mathcal{F}}^{\bot}}$ be the restriction of $g^{T(SM)}$ on $\mathcal {F}$, $\mathcal
{F}^{\bot}$ respectively.
Then $\nabla^{\mathcal {F}}$, $\nabla^{{\mathcal {F}}^{\bot}}$ are metric-preserving connections
of $\mathcal {F}$, $\mathcal
{F}^{\bot}$ respectively.

Now the Bott connection $\widetilde{\nabla}^{{\mathcal {F}}^{\bot}}$ on
$\mathcal{F}^{\bot}$ is determined by the following definition
\begin{definition}[{cf. [7], [11, Sect. 1.7]}]
 For any $X\in\Gamma(T(SM))$, $U\in\Gamma({\mathcal {F}}^{\bot})$,

{\upshape(i)}\quad If $X\in\Gamma({\mathcal {F}})$, set
$\widetilde{\nabla}^{{\mathcal {F}}^{\bot}}_XU=p^{\bot}[X,U]$;

{\upshape(ii)}\quad If $X\in\Gamma({\mathcal {F}}^{\bot})$, set
$\widetilde{\nabla}^{{\mathcal {F}}^{\bot}}_XU=\nabla^{{\mathcal
{F}}^{\bot}}_XU$.
\end{definition}
In general, the Bott connection $\widetilde{\nabla}^{{\mathcal
{F}}^{\bot}}$ is not a metric-preserving connection of $g^{{\mathcal
{F}}^{\bot}}$. One defines the dual connection
$\widetilde{\nabla}^{{\mathcal {F}}^{\bot},*}$ of the Bott
connection as follows,
$$d\langle U,V\rangle=\langle\widetilde{\nabla}^{{\mathcal {F}}^{\bot}}U,V\rangle+\langle U,\widetilde{\nabla}^{{\mathcal {F}}^{\bot},*}V\rangle,$$
where $U,V\in\Gamma({\mathcal {F}}^{\bot})$.

Following Bismut-Zhang [4, p.62] and Liu-Zhang \cite{LiuZ}, set
\begin{align}
2H=\widetilde{\nabla}^{{\mathcal
{F}}^{\bot},*}-\widetilde{\nabla}^{{\mathcal {F}}^{\bot}}\quad {\rm
and} \quad \widehat{\nabla}^{{\mathcal
{F}}^{\bot}}=\widetilde{\nabla}^{{\mathcal
{F}}^{\bot}}+H.\label{300}
\end{align}
It is known that the connection $\widehat{\nabla}^{{\mathcal {F}}^{\bot}}$ is the
symmetrization of the Bott connection with respect to the metric $g^{{\mathcal{F}}^{\bot}}$
on $\mathcal{F}^{\bot}$ and so a
metric-preserving connection on $\mathcal{F}^{\bot}$.

 Some basic properties of the $\Omega^1(SM)$-valued endomorphism $H$ are
 also established in [4, p.62] and \cite{LiuZ}.
\begin{lem}[{[4, p.62], \cite{LiuZ}}]\label{lemma 3}
 For any $U,V\in\Gamma({\mathcal {F}}^{\bot})$, one has that

{\upshape(1)}\quad
$\langle HU,V\rangle$=$\langle U,HV\rangle$,

{\upshape(2)}\quad $H(U)=0$,

{\upshape(3)}\quad $H={1\over 2}(g^{{\mathcal{F}}^{\bot}})^{-1}\widetilde{\nabla}^{{\mathcal
{F}}^{\bot}}g^{{\mathcal {F}}^{\bot}}$.
\end{lem}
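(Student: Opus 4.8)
The plan is to treat the three assertions as consequences of a single computation of the covariant derivative $\widetilde{\nabla}^{{\mathcal {F}}^{\bot}} g^{{\mathcal {F}}^{\bot}}$, proving (3) first and then reading off (1) and (2). The starting observation is that $\widetilde{\nabla}^{{\mathcal {F}}^{\bot},*}-\widetilde{\nabla}^{{\mathcal {F}}^{\bot}}$ is a difference of two connections on the \emph{same} vector bundle ${\mathcal {F}}^{\bot}$, hence is $C^\infty(SM)$-linear in the section on which it acts. This guarantees that $H=\tfrac12(\widetilde{\nabla}^{{\mathcal {F}}^{\bot},*}-\widetilde{\nabla}^{{\mathcal {F}}^{\bot}})$ is a genuine $\Omega^1(SM)$-valued endomorphism of ${\mathcal {F}}^{\bot}$, so that all three statements make sense pointwise and the inverse $(g^{{\mathcal {F}}^{\bot}})^{-1}$ in (3) may legitimately be applied.

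For (3) I would expand the covariant derivative of the metric using the defining relation of the dual connection. For $U,V\in\Gamma({\mathcal {F}}^{\bot})$,
\begin{align*}
(\widetilde{\nabla}^{{\mathcal {F}}^{\bot}} g^{{\mathcal {F}}^{\bot}})(U,V)=d\langle U,V\rangle-\langle \widetilde{\nabla}^{{\mathcal {F}}^{\bot}}U,V\rangle-\langle U,\widetilde{\nabla}^{{\mathcal {F}}^{\bot}}V\rangle.
\end{align*}
Substituting the definition $d\langle U,V\rangle=\langle \widetilde{\nabla}^{{\mathcal {F}}^{\bot}}U,V\rangle+\langle U,\widetilde{\nabla}^{{\mathcal {F}}^{\bot},*}V\rangle$ collapses the right-hand side to $\langle U,(\widetilde{\nabla}^{{\mathcal {F}}^{\bot},*}-\widetilde{\nabla}^{{\mathcal {F}}^{\bot}})V\rangle=2\langle U,HV\rangle$. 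Rewriting $2\langle U,HV\rangle=2\,g^{{\mathcal {F}}^{\bot}}(HV,U)$ and solving for the endomorphism acting on $V$ gives exactly $H=\tfrac12(g^{{\mathcal {F}}^{\bot}})^{-1}\widetilde{\nabla}^{{\mathcal {F}}^{\bot}} g^{{\mathcal {F}}^{\bot}}$, which is (3).

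Statement (1) then follows immediately: since $g^{{\mathcal {F}}^{\bot}}$ is a symmetric $2$-tensor, its covariant derivative is symmetric in the two section slots, so $2\langle HU,V\rangle=(\widetilde{\nabla}^{{\mathcal {F}}^{\bot}} g^{{\mathcal {F}}^{\bot}})(V,U)=(\widetilde{\nabla}^{{\mathcal {F}}^{\bot}} g^{{\mathcal {F}}^{\bot}})(U,V)=2\langle U,HV\rangle$, i.e. $H$ is self-adjoint. For (2) I read the one-form part of $H$ in a horizontal direction: by part (ii) of the definition of the Bott connection, $\widetilde{\nabla}^{{\mathcal {F}}^{\bot}}_X=\nabla^{{\mathcal {F}}^{\bot}}_X$ whenever $X\in\Gamma({\mathcal {F}}^{\bot})$, and $\nabla^{{\mathcal {F}}^{\bot}}$ is metric-preserving, so $\widetilde{\nabla}^{{\mathcal {F}}^{\bot}}_X g^{{\mathcal {F}}^{\bot}}=0$ along horizontal $X$; feeding this into (3) kills $H$ in every horizontal direction, which is the assertion $H(U)=0$ for $U\in\Gamma({\mathcal {F}}^{\bot})$.

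The argument is essentially formal, so I do not expect a single hard step; the only points needing care are the tensoriality check that justifies writing $(g^{{\mathcal {F}}^{\bot}})^{-1}\widetilde{\nabla}^{{\mathcal {F}}^{\bot}} g^{{\mathcal {F}}^{\bot}}$ as an endomorphism, and keeping the two slots of $H$ straight — assertion (1) concerns the endomorphism $H$ applied to a section, whereas (2) concerns the one-form part of $H$ evaluated on a horizontal vector. Conceptually, the content hidden behind this bookkeeping is that the failure of the Bott connection to preserve $g^{{\mathcal {F}}^{\bot}}$ is a purely vertical phenomenon, supported along ${\mathcal {F}}=V(SM)$, which is precisely why $H$ lives in the vertical codirections; in the present Finsler setting this is the same mechanism recorded infinitesimally by the Reeb-direction computation of Lemma \ref{lemma 2}.
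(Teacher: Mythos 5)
Your argument is correct. Note that the paper does not prove this lemma at all --- it is quoted from Bismut--Zhang [4, p.~62] and Liu--Zhang \cite{LiuZ} --- and your derivation (establishing $(\widetilde{\nabla}^{\mathcal{F}^{\bot}}g^{\mathcal{F}^{\bot}})(U,V)=2\langle U,HV\rangle$ from the definition of the dual connection, then reading off self-adjointness from the symmetry of $g^{\mathcal{F}^{\bot}}$ and the vanishing $H(U)=0$ from the fact that $\widetilde{\nabla}^{\mathcal{F}^{\bot}}_X=\nabla^{\mathcal{F}^{\bot}}_X$ is metric-preserving for horizontal $X$) is exactly the standard proof in those references, including the correct reading of $H(U)$ as the one-form slot rather than the endomorphism slot.
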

Write $H=(H_{ij})$ under the local frame (\ref{mathbf e}).
As a corollary of Lemma \ref{lemma 3}, one has that $H_{ij}=H_{ji}$ and
$H_{ij}=H_{ij\gamma}\omega^{n+\gamma}$ for some functions $H_{ij\gamma}$.
\begin{lem}\label{lemma 4}
Set
$A_{ijk}=\frac{1}{4}F[F^2]_{y^iy^jy^k}$. With respect to (\ref{ei to delta delta x}), one has
\begin{eqnarray}
H_{ij\gamma}=-A_{pqk}u_i^pu_j^qu_{\gamma}^k.
                                              \label{H under natrual basis}
\end{eqnarray}
Moreover, $H_{ij\gamma}=0$ if $i=n$ or $j=n$.
\end{lem}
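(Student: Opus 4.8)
The plan is to reduce the computation of $H_{ij\gamma}$ to a Lie-derivative computation of the fundamental tensor along the vertical frame vectors $\mathbf{e}_{n+\gamma}$, in the same spirit as (and in fact simpler than) the proof of Lemma \ref{lemma 2}. Since $H_{ij}=H_{ij\gamma}\omega^{n+\gamma}$ is purely vertical and $\omega^{n+\gamma}(\mathbf{e}_{n+\delta})=\delta^\gamma_\delta$, the coefficient is recovered by evaluating the endomorphism-valued one-form on the vertical vectors, that is, $H_{ij\gamma}=H_{ij}(\mathbf{e}_{n+\gamma})$. Thus it suffices to understand $H_{ij}$ tested against a vertical direction.

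First I would establish the key identity: for any vertical $X\in\Gamma(\mathcal{F})$ and the orthonormal horizontal frame (\ref{mathbf e}),
\begin{align*}
2H_{ij}(X)=(\mathcal{L}_X g)(\mathbf{e}_i,\mathbf{e}_j).
\end{align*}
This follows from Lemma \ref{lemma 3}(3), which gives $2H=(g^{\mathcal{F}^\perp})^{-1}\widetilde{\nabla}^{\mathcal{F}^\perp}g^{\mathcal{F}^\perp}$, so in the orthonormal frame $2H_{ij}(X)=(\widetilde{\nabla}^{\mathcal{F}^\perp}_X g^{\mathcal{F}^\perp})(\mathbf{e}_i,\mathbf{e}_j)$. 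Expanding the covariant derivative and using the definition of the Bott connection in the vertical direction, $\widetilde{\nabla}^{\mathcal{F}^\perp}_X\mathbf{e}_i=p^\perp[X,\mathbf{e}_i]$, the projection $p^\perp$ becomes transparent once paired against the horizontal vectors $\mathbf{e}_i,\mathbf{e}_j$, and the right-hand side collapses to $X\langle\mathbf{e}_i,\mathbf{e}_j\rangle-\langle[X,\mathbf{e}_i],\mathbf{e}_j\rangle-\langle\mathbf{e}_i,[X,\mathbf{e}_j]\rangle=(\mathcal{L}_X g^{T(SM)})(\mathbf{e}_i,\mathbf{e}_j)$. Since the vertical part of $g^{T(SM)}$ contributes nothing when evaluated on horizontal vectors, this equals $(\mathcal{L}_X g)(\mathbf{e}_i,\mathbf{e}_j)$, with $g$ as in (\ref{gij}).

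Next I would compute $\mathcal{L}_{\mathbf{e}_{n+\gamma}}g$ directly. Because $\mathbf{e}_{n+\gamma}=-u_\gamma^k\frac{\delta}{\delta y^k}=-u_\gamma^k F\frac{\partial}{\partial y^k}$ is purely vertical by (\ref{Gii}) and (\ref{ei to delta delta x}), one has $dx^i(\mathbf{e}_{n+\gamma})=0$ and hence $\mathcal{L}_{\mathbf{e}_{n+\gamma}}dx^i=0$ (this is where the present situation is cleaner than Lemma \ref{lemma 2}, whose Reeb field has a horizontal part). Therefore $\mathcal{L}_{\mathbf{e}_{n+\gamma}}g=\mathbf{e}_{n+\gamma}(g_{ij})\,dx^i\otimes dx^j$, and $\mathbf{e}_{n+\gamma}(g_{ij})=-u_\gamma^k F\,\partial_{y^k}\big(\tfrac12[F^2]_{y^iy^j}\big)=-2u_\gamma^k A_{ijk}$ directly from $A_{ijk}=\tfrac14 F[F^2]_{y^iy^jy^k}$. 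Pairing with $\mathbf{e}_i=u_i^p\frac{\delta}{\delta x^p}$ and $\mathbf{e}_j=u_j^q\frac{\delta}{\delta x^q}$ through $dx^i=\delta x^i$ then yields $2H_{ij\gamma}=(\mathcal{L}_{\mathbf{e}_{n+\gamma}}g)(\mathbf{e}_i,\mathbf{e}_j)=-2A_{pqk}u_i^p u_j^q u_\gamma^k$, which is exactly (\ref{H under natrual basis}).

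Finally, the vanishing when $i=n$ or $j=n$ is a consequence of the homogeneity of the Cartan tensor: since $g_{ij}$ is homogeneous of degree $0$ in $y$, Euler's relation gives $y^p\partial_{y^p}g_{ij}=0$, whence $A_{pqk}y^p=0$ (and likewise in the other indices by the full symmetry of $A$). As $u_n^p=y^p/F$ by the remark following (\ref{ei to delta delta x}), one gets $H_{nj\gamma}=-A_{pqk}u_n^p u_j^q u_\gamma^k=-\tfrac1F(A_{pqk}y^p)u_j^q u_\gamma^k=0$, and similarly $H_{in\gamma}=0$. The main obstacle is the first step: one must check carefully that the symmetric defect $H$ of the Bott connection along vertical directions is genuinely captured by $\mathcal{L}_X g$, i.e. that the projection $p^\perp$ and the restriction from $g^{T(SM)}$ to $g$ both disappear when everything is tested against horizontal frame vectors. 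Once this identity is secured, the remaining steps are a short and routine homogeneity computation.
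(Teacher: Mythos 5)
Your proposal is correct and follows essentially the same route as the paper: it reduces $H_{ij\gamma}$ to $\tfrac12(\mathcal{L}_{\mathbf{e}_{n+\gamma}}g)(\mathbf{e}_i,\mathbf{e}_j)$ via the identity $\langle 2H(X)U,V\rangle=(\mathcal{L}_Xg^{\mathcal{F}^{\bot}})(U,V)$ for vertical $X$, computes the Lie derivative using $\mathcal{L}_{\mathbf{e}_{n+\gamma}}dx^i=0$, and disposes of the $i=n$, $j=n$ cases by Euler's lemma. You merely spell out in more detail the step the paper labels ``one gets easily.''
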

\begin{proof}
For any $X\in\Gamma({\mathcal {F}})$ and $U,V\in\Gamma(\mathcal{F}^{\bot})$,
one gets easily that
\begin{align*}
\langle 2H(X)U,V\rangle=(\mathcal{L}_Xg^{{\mathcal{F}}^{\bot}})(U,V).
\end{align*}
So by (\ref{gij}), (\ref{ei to delta delta x}) and (\ref{H under natrual basis}),
\begin{align*}
H_{ij\gamma}&=\langle H(\mathbf{e}_{n+\gamma})\mathbf{e}_{i},\mathbf{e}_{j}\rangle
={1\over 2}(\mathcal{L}_{\mathbf{e}_{n+\gamma}}g^{{\mathcal{F}}^{\bot}})(\mathbf{e}_{i},\mathbf{e}_{j})
={1\over 2}(\mathcal{L}_{\mathbf{e}_{n+\gamma}}g)(\mathbf{e}_{i},\mathbf{e}_{j})\\
&={1\over 2}({\mathbf{e}_{n+\gamma}}g_{pq})dx^p\otimes dx^q(\mathbf{e}_{i},\mathbf{e}_{j})
=-{1\over 2}Fu_{\gamma}^k\frac{\partial g_{pq}}{\partial
y^k}dx^p\otimes dx^q(\mathbf{e}_{i},\mathbf{e}_{j})\\
&=-{1\over 4}F[F^2]_{y^py^qy^k}u_i^pu_j^qu_{\gamma}^k=-A_{pqk}u_i^pu_j^qu_{\gamma}^k.
\end{align*}

By the Euler lemma, it is clear that $H_{ij\gamma}=0$ if $i=n$ or
$j=n$.
\end{proof}
\begin{remark}
Traditionally, the Cartan tensor is defined as
$\mathbf{A}=A_{ijk}dx^i\otimes dx^j\otimes dx^k$, and the Cartan
form is that $\mathbf{I}=g^{ij}A_{ijk}dx^k:=A_{k}dx^k$ (cf. [8,
p.11-12]). From this reason, we call $H$ the Cartan endomorphism,
and still call the one form $\eta={\rm tr}[H]\in\Omega^1(SM)$ the
Cartan form for a Finsler manifold $(M,F)$.
\end{remark}
Let $\bm{\omega}=(\omega_j^i)$ be the connection matrix of the Bott
connection with respect to the orthnormal frame (\ref{mathbf e}),
i.e.,
\begin{align}
\tD{\mathcal{F}^{\bot}}\mathbf{e}_i=\omega_i^j\mathbf{e}_j.\label{400}
\end{align}

\begin{theo}
The connection matrix $\bm{\omega}=(\omega_j^i)$ in (\ref{400}) of the Bott
connection is the unique solution of the following structure
equations,
\begin{equation}\left\{
\begin{aligned}
&d\vartheta=\vartheta\wedge\bm{\omega},\\
&\bm{\omega}+\bm{\omega}^t=-2H,
\end{aligned}\right.\label{Chern connection structure eq. matrix}
\end{equation}
where $\vartheta=(\omega^1,\ldots,\omega^n)$.
\end{theo}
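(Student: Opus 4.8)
The plan is to show that the connection matrix $\bm{\omega}$ of the Bott connection is a solution of the system (\ref{Chern connection structure eq. matrix}) and then that the system has at most one solution; together these prove the theorem. Throughout I would work in the orthonormal frame (\ref{mathbf e}), record the two clauses of the Bott definition as the two readings of the connection forms, namely $\omega_j^i(\mathbf{e}_k)=\langle\nabla^{T(SM)}_{\mathbf{e}_k}\mathbf{e}_j,\mathbf{e}_i\rangle$ for horizontal arguments $\mathbf{e}_k$ ($k\le n$, by clause (ii), since $\widetilde{\nabla}^{\mathcal{F}^{\bot}}_{\mathbf{e}_k}=\nabla^{\mathcal{F}^{\bot}}_{\mathbf{e}_k}=p^{\bot}\nabla^{T(SM)}_{\mathbf{e}_k}$ and $\mathbf{e}_i$ is horizontal) and $\omega_j^i(\mathbf{e}_{n+\gamma})=\langle[\mathbf{e}_{n+\gamma},\mathbf{e}_j],\mathbf{e}_i\rangle$ for vertical arguments (by clause (i)).

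The second, algebraic equation I would obtain directly from the definitions. Evaluating the defining relation of the dual connection $\widetilde{\nabla}^{\mathcal{F}^{\bot},*}$ on the orthonormal frame, where $\langle\mathbf{e}_i,\mathbf{e}_j\rangle=\delta_{ij}$ is constant, forces the dual connection matrix to be $-\bm{\omega}^t$. Substituting this into $2H=\widetilde{\nabla}^{\mathcal{F}^{\bot},*}-\widetilde{\nabla}^{\mathcal{F}^{\bot}}$ from (\ref{300}) gives $-\bm{\omega}^t-\bm{\omega}=2H$, i.e. $\bm{\omega}+\bm{\omega}^t=-2H$, which is consistent with the symmetry $H=H^t$ from Lemma \ref{lemma 3}.

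For the first equation $d\omega^i=\omega^j\wedge\omega_j^i$ I would test both sides against pairs of frame vectors using $d\omega^i(X,Y)=X\omega^i(Y)-Y\omega^i(X)-\omega^i([X,Y])$ and split into three cases according to the horizontal/vertical type of the arguments; the derivative terms drop out because $\omega^i(\mathbf{e}_a)$ is always constant. In the horizontal--horizontal case the identity reduces to the torsion-freeness of the Levi-Civita connection $\nabla^{T(SM)}$, since $\nabla^{T(SM)}_{\mathbf{e}_l}\mathbf{e}_k-\nabla^{T(SM)}_{\mathbf{e}_k}\mathbf{e}_l=[\mathbf{e}_l,\mathbf{e}_k]$. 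In the horizontal--vertical case it closes by the very definition $\widetilde{\nabla}^{\mathcal{F}^{\bot}}_X U=p^{\bot}[X,U]$, matching $-\omega^i([\mathbf{e}_k,\mathbf{e}_{n+\gamma}])$ against $\omega_k^i(\mathbf{e}_{n+\gamma})$. In the vertical--vertical case both sides vanish because $\mathcal{F}=V(SM)$ is integrable, so $[\mathbf{e}_{n+\alpha},\mathbf{e}_{n+\beta}]\in\Gamma(\mathcal{F})$ and $\omega^i$ (which annihilates $\mathcal{F}$ for $i\le n$) kills it.

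Finally I would prove uniqueness by the standard algebraic argument behind the fundamental lemma of Riemannian geometry. If $\bm{\omega}$ and $\bm{\omega}'$ both solve the system, their difference $\psi=(\psi_j^i)$ satisfies $\omega^j\wedge\psi_j^i=0$ and $\psi_j^i+\psi_i^j=0$. Expanding $\psi_j^i$ in the coframe (\ref{total cobasis of SM}) and using the linear independence of the products $\omega^j\wedge\omega^k$ and $\omega^j\wedge\omega^{n+\gamma}$, the first relation forces the vertical components of $\psi_j^i$ to vanish and makes the horizontal coefficients $a_{jk}^i$ symmetric in $(j,k)$, while the second relation makes them antisymmetric in $(i,j)$; the usual three-fold cyclic interchange then yields $a_{jk}^i=-a_{jk}^i$, hence $\psi=0$. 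I expect the main obstacle to be the bookkeeping in the first structure equation---correctly reading the connection forms off the two clauses of the Bott definition and confirming that torsion-freeness and integrability make each of the three cases close---whereas the uniqueness step is routine once the two symmetry types of $a_{jk}^i$ are isolated.
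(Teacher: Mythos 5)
Your proposal is correct and follows essentially the same route as the paper's own proof: the same three-case evaluation of $d\omega^i-\omega^j\wedge\omega_j^i$ on vertical/horizontal pairs (using integrability of $\mathcal{F}$, clause (i) of the Bott definition, and torsion-freeness of $\nabla^{T(SM)}$), the second equation read off from the definition of $H$, and the standard Cartan-lemma-plus-cyclic-permutation argument for uniqueness. The only difference is cosmetic: you make explicit the vanishing of the vertical components of the difference of two solutions, which the paper leaves implicit.
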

\begin{proof}
For any $X,Y\in\Gamma(T(SM)$,
\begin{align*}
&(d\omega^i-\omega^j\wedge\omega_j^i)(X,Y)\\
=&X(\omega^i(Y))-Y(\omega^i(X))-\omega^i([X,Y])-\left(\omega^j(X)\omega_j^i(Y)-\omega^j(Y)\omega_j^i(X)\right).
\end{align*}
Now for any $X,Y\in\Gamma(\mathcal{F})$, and $U,V\in\Gamma(\mathcal{F}^{\bot})$,
one has
\begin{align*}
(d\omega^i-\omega^j\wedge\omega_j^i)(X,Y)=-\omega^i([X,Y])=0,
\end{align*}
\begin{align*}
&(d\omega^i-\omega^j\wedge\omega_j^i)(X,U)=X(\omega^i(U))+\omega^j(U)\omega_j^i(X)-\omega^i([X,U])\\
=&\omega^i\left(\widetilde{\nabla}^{{\mathcal
{F}}^{\bot}}_XU-[X,U]\right)=0,
\end{align*}
and
\begin{align*}
&(d\omega^i-\omega^j\wedge\omega_j^i)(U,V)\\
=&\left(U(\omega^i(V))+\omega^j(V)\omega_j^i(U)\right)-\left(V(\omega^i(U))+\omega^j(U)\omega_j^i(V)\right)-\omega^i([U,V])\\
=&\omega^i\left(\widetilde{\nabla}^{{\mathcal
{F}}^{\bot}}_UV-\widetilde{\nabla}^{{\mathcal
{F}}^{\bot}}_VU-[U,V]\right)=\omega^i\left(\D{T(SM)}_UV-\D{T(SM)}_VU-[U,V]\right)=0.
\end{align*}
Hence the Bott connection matrix $\bm{\omega}$ satisfies the first equation of (\ref{Chern connection
structure eq. matrix}).

The second equation of (\ref{Chern connection structure eq. matrix})
comes directly from the definition of $H$.

To prove the uniqueness, let
$\widetilde{\bm{\omega}}=(\widetilde{\omega}_j^i)$ be another
solution of (\ref{Chern connection structure eq. matrix}). One has
\begin{align*}
\omega^j\wedge(\widetilde{\omega}_j^i-\omega_j^i)=0.
\end{align*}
It deduces that
\begin{align*}
\widetilde{\omega}_j^i-\omega_j^i=a_{jk}^i\omega^k,\quad {\rm
with}\quad a_{jk}^i=a_{kj}^i.
\end{align*}
From the second equation of (\ref{Chern connection structure eq.
matrix}), one has that
\begin{align*}
0=(\omega_j^i+\omega_i^j)-(\widetilde{\omega}_j^i+\widetilde{\omega}_i^j)
=(a_{jk}^i+a_{ik}^j)\omega^k,
\end{align*}
and so $a_{jk}^i+a_{ik}^j=0$. Thus
$$(a_{jk}^i+a_{ik}^j)+(a_{ij}^k+a_{kj}^i)-(a_{ki}^j+a_{ji}^k)=2a_{jk}^i=0.$$
So we conclude that $\widetilde{\omega}_j^i-\omega_j^i=0$.
\end{proof}
\begin{mcor}
The connection forms of the Bott connection in (\ref{400}) satisfy
$$\omega_{\alpha}^n=-\omega^{\alpha}_n=\omega^{n+\alpha},\quad{\rm and}\quad \omega^n_n=0.$$
\end{mcor}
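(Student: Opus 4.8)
The plan is to read off all three relations directly from the two structure equations \eqref{Chern connection structure eq. matrix} of the preceding theorem, using Lemma~\ref{lemma 4} and the Hilbert-form identity \eqref{d omega^n}. In components under the orthonormal frame \eqref{mathbf e}, the second equation $\bm{\omega}+\bm{\omega}^t=-2H$ reads $\omega_j^i+\omega_i^j=-2H_{ij}$, and the first equation $d\vartheta=\vartheta\wedge\bm{\omega}$ reads $d\omega^i=\omega^j\wedge\omega_j^i$.

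First I would dispose of the two algebraic relations. Lemma~\ref{lemma 4} gives $H_{ij\gamma}=0$ whenever $i=n$ or $j=n$, so the forms $H_{\alpha n}$ and $H_{nn}$ vanish identically. Putting $(i,j)=(\alpha,n)$ in the second structure equation yields $\omega_\alpha^n+\omega_n^\alpha=-2H_{\alpha n}=0$, i.e. $\omega_\alpha^n=-\omega_n^\alpha$, and putting $(i,j)=(n,n)$ gives $2\omega_n^n=-2H_{nn}=0$, i.e. $\omega_n^n=0$. These are the second and third assertions. For the first assertion I would feed $\omega_n^n=0$ into the $i=n$ component of the first structure equation, obtaining $d\omega^n=\sum_{\beta=1}^{n-1}\omega^\beta\wedge\omega_\beta^n$, and compare with \eqref{d omega^n} to get $\sum_{\beta}\omega^\beta\wedge(\omega_\beta^n-\omega^{n+\beta})=0$. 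Since $\omega^1,\dots,\omega^{n-1}$ are members of the coframe \eqref{total cobasis of SM} and hence pointwise independent, Cartan's lemma forces $\omega_\alpha^n-\omega^{n+\alpha}=\sum_{\gamma=1}^{n-1}p_{\alpha\gamma}\omega^\gamma$ with $p_{\alpha\gamma}=p_{\gamma\alpha}$.

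The hard part is to show that this residual symmetric, purely horizontal term $p_{\alpha\gamma}$ vanishes; the comparison of exterior derivatives alone cannot detect it, because any symmetric $p$ drops out of $\sum_\beta\omega^\beta\wedge\sum_\gamma p_{\beta\gamma}\omega^\gamma$. To remove it I would return to the definition of the Bott connection. Evaluating the displayed expansion on $\mathbf{e}_\beta$ gives $p_{\alpha\beta}=\omega_\alpha^n(\mathbf{e}_\beta)=\langle\widetilde{\nabla}^{\mathcal{F}^{\bot}}_{\mathbf{e}_\beta}\mathbf{e}_\alpha,\mathbf{G}\rangle$ by \eqref{400}. Since $\mathbf{e}_\beta\in\Gamma(\mathcal{F}^{\bot})$, part (ii) of the definition replaces $\widetilde{\nabla}^{\mathcal{F}^{\bot}}$ by $\nabla^{\mathcal{F}^{\bot}}=p^{\bot}\nabla^{T(SM)}p^{\bot}$, and because $\mathbf{G}=\mathbf{e}_n\in\Gamma(\mathcal{F}^{\bot})$ this equals $\langle\nabla^{T(SM)}_{\mathbf{e}_\beta}\mathbf{e}_\alpha,\mathbf{G}\rangle=-\langle\mathbf{e}_\alpha,\nabla^{T(SM)}_{\mathbf{e}_\beta}\mathbf{G}\rangle$, the last step by metric compatibility of $\nabla^{T(SM)}$ together with $\langle\mathbf{e}_\alpha,\mathbf{G}\rangle=0$.

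Finally I would symmetrize and invoke Lemma~\ref{lemma 2}. Using the symmetry of $p$ and the Killing-type identity $(\mathcal{L}_{\mathbf{G}}g^{T(SM)})(X,Y)=\langle\nabla^{T(SM)}_X\mathbf{G},Y\rangle+\langle\nabla^{T(SM)}_Y\mathbf{G},X\rangle$ for the Levi-Civita connection, one gets $2p_{\alpha\beta}=-(\mathcal{L}_{\mathbf{G}}g^{T(SM)})(\mathbf{e}_\alpha,\mathbf{e}_\beta)$. I would then observe that this Lie derivative vanishes on the horizontal pair $(\mathbf{e}_\alpha,\mathbf{e}_\beta)$: by \eqref{Lie direvative of g} the Lie derivative of the fundamental tensor $g$ is a sum of terms each carrying a factor $\omega^{n+\gamma}$, which annihilates the horizontal fields $\mathbf{e}_\alpha,\mathbf{e}_\beta$, while the remaining vertical summand $\sum_\gamma\omega^{n+\gamma}\otimes\omega^{n+\gamma}$ of $g^{T(SM)}$ likewise contributes nothing on horizontal arguments. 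Hence $p_{\alpha\beta}=0$, giving $\omega_\alpha^n=\omega^{n+\alpha}$ and completing the corollary. I expect the Killing-identity step, and the verification that $\mathcal{L}_{\mathbf{G}}g^{T(SM)}$ dies on horizontal vectors, to be the only non-routine points; everything else is bookkeeping with the two structure equations.
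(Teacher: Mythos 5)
Your proposal is correct, and it follows the paper's skeleton up to the decisive step: both arguments obtain $\omega_\alpha^n=-\omega_n^\alpha$ and $\omega_n^n=0$ from $\bm{\omega}+\bm{\omega}^t=-2H$ together with Lemma \ref{lemma 4}, and both use the $i=n$ component of $d\vartheta=\vartheta\wedge\bm{\omega}$ against Lemma \ref{differential of Hilbert form} plus Cartan's lemma to reduce everything to a symmetric horizontal residue ($p_{\alpha\beta}$ in your notation, $c_{\alpha\beta}$ in the paper's). Where you genuinely diverge is in killing that residue. The paper stays entirely inside the exterior calculus: it modifies the connection matrix so that $\widetilde\omega_\alpha^n=\omega^{n+\alpha}$ exactly, computes $\mathcal{L}_{\mathbf{e}_n}g=\mathcal{L}_{\mathbf{e}_n}\sum_i\omega^i\otimes\omega^i$ via the Cartan homotopy formula and the first structure equation, and reads off $2c_{\alpha\beta}=(\widetilde\omega_\alpha^\beta+\widetilde\omega_\beta^\alpha)(\mathbf{e}_n)=0$ by comparison with Lemma \ref{lemma 2}. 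You instead go back to the definition of the Bott connection, identify $p_{\alpha\beta}=\langle\nabla^{T(SM)}_{\mathbf{e}_\beta}\mathbf{e}_\alpha,\mathbf{G}\rangle=-\langle\mathbf{e}_\alpha,\nabla^{T(SM)}_{\mathbf{e}_\beta}\mathbf{G}\rangle$ via part (ii) of Definition 2 and metric compatibility, and then use the Killing identity for the Levi-Civita connection to convert $2p_{\alpha\beta}$ into $-(\mathcal{L}_{\mathbf{G}}g^{T(SM)})(\mathbf{e}_\alpha,\mathbf{e}_\beta)$, which vanishes by \eqref{Lie direvative of g} since every surviving term carries a vertical factor $\omega^{n+\gamma}$. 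Your route is arguably more transparent about \emph{why} Lemma \ref{lemma 2} is the relevant input (the obstruction is exactly the horizontal part of the deformation tensor of the Reeb field), at the cost of invoking the Levi-Civita connection of the Sasaki metric, which the paper's proof never needs explicitly; the paper's computation is longer but self-contained within the coframe formalism. All the individual steps you flag as non-routine (the Killing identity and the vanishing of $\mathcal{L}_{\mathbf{G}}g^{T(SM)}$ on horizontal pairs, including the Lie derivative of the vertical summand $\sum_\gamma\omega^{n+\gamma}\otimes\omega^{n+\gamma}$, each term of which retains a factor annihilating $H(SM)$) check out.
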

\begin{proof}
The formula $\omega^n_n=0$ comes directly from Lemma 4. By Lemma 1, the connection
forms $\omega_{\alpha}^n$ can be written as
\begin{align}
\omega_{\alpha}^n=\omega^{n+\alpha}+c_{\alpha\beta}\omega^{\beta}\quad\mbox{\rm with}
\quad c_{\alpha\beta}=c_{\beta\alpha}.\label{600}
\end{align}
The second equation of
(\ref{Chern connection structure eq. matrix}) and Lemma 4 imply that
\begin{align}
\omega^{\alpha}_n=-\omega_{\alpha}^n=-\omega^{n+\alpha}-c_{\alpha\beta}\omega^{\beta}.\label{601}
\end{align}
By (\ref{601}) and the first equation of (\ref{Chern connection structure eq.
matrix}), one has that
\begin{align*}
d\omega^{\alpha}&=\omega^\beta\wedge\omega_\beta^\alpha+\omega^n\wedge\omega_n^\alpha\\
&=\omega^\beta\wedge(\omega_\beta^\alpha+c_{\alpha\beta}\omega^n)
+\omega^n\wedge(-\omega^{n+\alpha}).
\end{align*}
Set
$\widetilde{\omega}_\beta^\alpha=\omega_\beta^\alpha+c_{\alpha\beta}\omega^n$,
$\widetilde{\omega}_{\alpha}^n=-\widetilde{\omega}^{\alpha}_n=\omega^{n+\alpha}$
and $\widetilde{\omega}^n_n=0$. Clearly, $\widetilde{\bm{\omega}}=(\widetilde{\omega}_j^i)$
satisfies the first equation of (\ref{Chern connection structure
eq. matrix}). Moreover,
$$\widetilde{\omega}_\beta^\alpha+\widetilde{\omega}_\beta^\alpha
=2c_{\alpha\beta}\omega^n-2H_{\alpha\beta\gamma}\omega^{n+\gamma}.$$
Note that by Cartan homotopy formula, one has
\begin{align*}
&\mathcal{L}_{\mathbf{e}_n}g
=\mathcal{L}_{\mathbf{e}_n}\sum_{i=1}^n\omega^i\otimes\omega^i
=\sum_{i=1}^n\left((\mathcal{L}_{\mathbf{e}_n}\omega^i)\otimes\omega^i
+\omega^i\otimes(\mathcal{L}_{\mathbf{e}_n}\omega^i)\right)\\
=&\sum_{i=1}^n\left((i_{\mathbf{e}_n}d\omega^i)\otimes\omega^i
+\omega^i\otimes(i_{\mathbf{e}_n}d\omega^i)\right)
=\sum^{n-1}_{\alpha=1}\left((i_{\mathbf{e}_n}d\omega^\alpha)\otimes\omega^\alpha
+\omega^\alpha\otimes(i_{\mathbf{e}_n}d\omega^\alpha)\right)\\
=&\sum^{n-1}_{\alpha,\beta=1}
\left(i_{\mathbf{e}_n}(\omega^\beta\wedge{\widetilde\omega^\alpha_\beta})\otimes\omega^\alpha
+\omega^\alpha\otimes
i_{\mathbf{e}_n}(\omega^\beta\wedge{\widetilde\omega^\alpha_\beta})
+i_{\mathbf{e}_n}(\omega^n\wedge{\widetilde\omega^\alpha_n})\otimes\omega^\alpha
+\omega^\alpha\otimes i_{\mathbf{e}_n}(\omega^n\wedge{\widetilde\omega^\alpha_n})\right)\\
=&-\sum^{n-1}_{\alpha,\beta=1}({\widetilde\omega}^\alpha_\beta(\mathbf{e}_n)+{\widetilde\omega}^\beta_\alpha(\mathbf{e}_n))
\omega^\alpha\otimes\omega^\beta-\sum^{n-1}_{\alpha=1}(\omega^\alpha\otimes\omega^{n+\alpha}+
\omega^{n+\alpha}\otimes\omega^\alpha).
\end{align*}
Comparing with Lemma 2, we conclude that
$$2c_{\alpha\beta}=(\widetilde{\omega}_\alpha^\beta
+\widetilde{\omega}_\beta^\alpha)(\mathbf{e}_n)=0.$$
Now by (\ref{600}), the corollary is proved.
\end{proof}
\begin{remark}
Noticed that the Chern connection is defined by the structure
equations (\ref{Chern connection structure eq. matrix}) (cf. [3,
p.38], [6, p.282], [8, p.23-33]), so the Bott connection in our case
is exactly the Chern connection. In this case, we partially answer a
question of M. Abate and G. Patrizio (cf. [1, p.29]). Moreover,
under the orthnormal frame (\ref{mathbf e}), the symmetrization
$\widehat{\nabla}^{\mathcal{F}^{\bot}}$ of the Bott connection has
the connection matrix
\begin{align}
\widehat{\bm{\omega}}=\bm{\omega}+H.\label{100}
\end{align}
In [3, p.39], an expression of the Cartan connection is given in the
local coordinate system on $SM$. One can check easily that these two
expressions are differ from a gauge transformation of the
connection. So $\widehat{\nabla}^{\mathcal{F}^{\bot}}$ turns out to
be the Cartan connection in Finsler geometry.
\end{remark}

Now we consider the rescaled metrics on $SM$ with $\epsilon>0$,
\begin{align}
g^{T(SM),\epsilon}=\frac{1}{\epsilon^2}\sum_{i=1}^n\omega^i\otimes\omega^i
+\sum_{\alpha=1}^{n-1}\omega^{n+\alpha}\otimes\omega^{n+\alpha}.\label{rescaled
metric on T(SM)}
\end{align}

Let $\nabla^{T(SM),\epsilon}$ be the Levi-Civita connection of
$g^{T(SM),\epsilon}$ and $\nabla^{{\mathcal
F}^{\bot},\epsilon}=p^{\bot}\nabla^{T(SM),\epsilon}p^{\bot}$.

Following Liu-Zhang \cite{LiuZ} and Zhang [11, Sect. 1.7], the
Cartan connection $\widehat{\nabla}^{\mathcal{F}^{\bot}}$ now can
also be obtained through the adiabatic limit technique, i.e.,
\begin{prop}
Let
$\nabla^{{\mathcal{F}}^\bot,\epsilon}=p^{\bot}\nabla^{T(SM),\epsilon}p^{\bot}$,
then
$$\lim\limits_{\epsilon\rightarrow0}\nabla^{{\mathcal {F}}^{\bot},\epsilon}=\widehat{\nabla}^{{\mathcal {F}}^{\bot}}.$$
\end{prop}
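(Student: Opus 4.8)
The plan is to compute the projected connection $p^{\bot}\nabla^{T(SM),\epsilon}p^{\bot}$ directly from the Koszul formula for $\nabla^{T(SM),\epsilon}$ and to track how each term scales in $\epsilon$. Since the metric (\ref{rescaled metric on T(SM)}) is block-diagonal for the fixed splitting $T(SM)=\mathcal{F}\oplus\mathcal{F}^{\bot}$, the orthogonal projections $p,p^{\bot}$ are the same for every $\epsilon$, so it suffices to determine $\lim_{\epsilon\to0}\langle p^{\bot}\nabla^{T(SM),\epsilon}_{X}U,W\rangle_{\epsilon}$ for a fixed $U\in\Gamma(\mathcal{F}^{\bot})$, every test field $W\in\Gamma(\mathcal{F}^{\bot})$, and for $X$ running over $\Gamma(\mathcal{F})$ and $\Gamma(\mathcal{F}^{\bot})$ separately. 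I write $\langle\cdot,\cdot\rangle_{\epsilon}=g^{T(SM),\epsilon}$ and $\langle\cdot,\cdot\rangle=g^{T(SM)}$, and record the two facts used throughout: on horizontal vectors $\langle\cdot,\cdot\rangle_{\epsilon}=\epsilon^{-2}\langle\cdot,\cdot\rangle$, while on vertical vectors $\langle\cdot,\cdot\rangle_{\epsilon}=\langle\cdot,\cdot\rangle$.

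First I would treat $X\in\Gamma(\mathcal{F}^{\bot})$. Writing out the six Koszul terms of $2\langle\nabla^{T(SM),\epsilon}_{X}U,W\rangle_{\epsilon}$, every entry pairs two horizontal vectors (the three derivative terms $X\langle U,W\rangle_{\epsilon}$, $U\langle X,W\rangle_{\epsilon}$, $W\langle X,U\rangle_{\epsilon}$, and the three bracket terms paired against a horizontal vector), so each carries exactly one factor $\epsilon^{-2}$. The common factor cancels, the right-hand side collapses to the Koszul formula for $\nabla^{T(SM)}$, and projecting onto $\mathcal{F}^{\bot}$ gives $p^{\bot}\nabla^{T(SM),\epsilon}_{X}U=\nabla^{\mathcal{F}^{\bot}}_{X}U$ for \emph{every} $\epsilon$. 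By the definition of the Bott connection this is $\widetilde{\nabla}^{\mathcal{F}^{\bot}}_{X}U$, and since $H(X)=0$ for horizontal $X$ (Lemma \ref{lemma 4}, as $H_{ij}$ is $\omega^{n+\gamma}$-valued and hence vertical), it already equals $\widehat{\nabla}^{\mathcal{F}^{\bot}}_{X}U$; no limit is needed in this case.

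The essential case is $X\in\Gamma(\mathcal{F})$. Now $\langle X,W\rangle_{\epsilon}=\langle X,U\rangle_{\epsilon}=0$, so two derivative terms drop. Of the rest, $X\langle U,W\rangle_{\epsilon}$, $\langle[X,U],W\rangle_{\epsilon}$ and $\langle[X,W],U\rangle_{\epsilon}$ each pair two horizontal vectors and scale like $\epsilon^{-2}$, whereas the single term $\langle[U,W],X\rangle_{\epsilon}$ pairs the vertical field $X$ against the vertical component of a bracket of horizontal fields, so it scales like $\epsilon^{0}$. Multiplying the whole identity by $\epsilon^{2}/2$ and letting $\epsilon\to0$ annihilates precisely this last term, leaving
\begin{align*}
\lim_{\epsilon\to0}\langle p^{\bot}\nabla^{T(SM),\epsilon}_{X}U,W\rangle
=\tfrac12\bigl(X\langle U,W\rangle+\langle p^{\bot}[X,U],W\rangle-\langle p^{\bot}[X,W],U\rangle\bigr).
\end{align*}
To identify this with $\widehat{\nabla}^{\mathcal{F}^{\bot}}_{X}U=\widetilde{\nabla}^{\mathcal{F}^{\bot}}_{X}U+H(X)U=p^{\bot}[X,U]+H(X)U$ (cf. (\ref{300})), I would expand the Lie derivative to rewrite the right-hand side as $\langle p^{\bot}[X,U],W\rangle+\tfrac12(\mathcal{L}_{X}g^{\mathcal{F}^{\bot}})(U,W)$, and then invoke the identity $2\langle H(X)U,W\rangle=(\mathcal{L}_{X}g^{\mathcal{F}^{\bot}})(U,W)$ for vertical $X$ (established in the proof of Lemma \ref{lemma 4}). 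This shows the limit equals $\langle\widehat{\nabla}^{\mathcal{F}^{\bot}}_{X}U,W\rangle$ for all horizontal $W$, finishing the proof.

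The anticipated obstacle is bookkeeping rather than conceptual: one must confirm that among the six Koszul terms exactly one, namely $\langle[U,W],X\rangle_{\epsilon}$ recording the vertical projection of $[U,W]$ (equivalently, the failure of $\mathcal{F}^{\bot}$ to be integrable, the second fundamental form contribution), fails to scale like $\epsilon^{-2}$, so that this is the only quantity removed in the adiabatic limit. Verifying that its removal leaves exactly the symmetrizing Lie-derivative term, which the identity from Lemma \ref{lemma 4} reinterprets as $H$, is the crux; everything else reduces to the uniform $\epsilon^{-2}$ rescaling and the definition of the Bott connection.
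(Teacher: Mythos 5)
Your proof is correct and follows essentially the same route as the paper: the paper states this proposition without proof, deferring to Liu--Zhang, but the identical Koszul-formula computation with the $\epsilon^{2}$ rescaling (and the identification of the surviving symmetric term with $H$ via $2\langle H(X)U,V\rangle=(\mathcal{L}_{X}g^{\mathcal{F}^{\bot}})(U,V)$) is exactly what the authors carry out inside their proof of Proposition \ref{H under conformal change}. Your bookkeeping of which Koszul terms scale like $\epsilon^{-2}$ versus $\epsilon^{0}$, and the observation that the horizontal-direction case holds for every $\epsilon$ without any limit, are both accurate.
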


Furthermore, by using the technique of the adiabatic limit, we can
prove the following property of the Cartan endomorphism $H$.

\begin{prop}\label{H under conformal change}
Let $(M,F)$ be a Finsler manifold. For any
$\sigma\in\pi^*C^{\infty}(M)$, let ${\bar
g}^{T(SM)}=e^{2\sigma}g^{T(SM)}$ and $\bar H$ be the associated
Cartan endomorphism, then
$${\bar H}=H.$$
\end{prop}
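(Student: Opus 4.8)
The plan is to reduce the statement to the Lie-derivative characterisation of the Cartan endomorphism that is recorded in the proof of Lemma \ref{lemma 4}, and then to exploit the fact that $\sigma$, being pulled back from $M$, is constant along the fibres of $\pi$. Recall that by Lemma \ref{lemma 3}(2) (equivalently, by the relation $H_{ij}=H_{ij\gamma}\omega^{n+\gamma}$ noted just after Lemma \ref{lemma 3}) the one-form part of $H$ is purely vertical, so $H$ is completely determined by the endomorphisms $H(X)\in\End(\mathcal{F}^{\bot})$ obtained by contracting with vertical fields $X\in\Gamma(\mathcal{F})$. Hence it suffices to prove $\bar H(X)=H(X)$ for every $X\in\Gamma(\mathcal{F})$.

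First I would invoke the identity established in the proof of Lemma \ref{lemma 4}, namely that for $X\in\Gamma(\mathcal{F})$ and $U,V\in\Gamma(\mathcal{F}^{\bot})$ one has $\langle 2H(X)U,V\rangle=(\mathcal{L}_Xg^{\mathcal{F}^{\bot}})(U,V)$, and I would note that its derivation does not depend on the metric: by Lemma \ref{lemma 3}(3), $2\langle H(X)U,V\rangle=(\tD{\mathcal{F}^{\bot}}_Xg^{\mathcal{F}^{\bot}})(U,V)$, and for vertical $X$ the Bott connection acts by $\tD{\mathcal{F}^{\bot}}_XU=p^{\bot}[X,U]$, which depends only on the splitting $T(SM)=\mathcal{F}\oplus\mathcal{F}^{\bot}$. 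Since $U,V$ are horizontal the projection $p^{\bot}$ may be dropped, and the right-hand side collapses to the Koszul expression for $(\mathcal{L}_Xg^{\mathcal{F}^{\bot}})(U,V)$. The crucial structural observation is that a conformal change $\bar g^{T(SM)}=e^{2\sigma}g^{T(SM)}$ preserves orthogonality, so $\mathcal{F}^{\bot}$, the projection $p^{\bot}$, and the vertical part of the Bott connection are all unchanged; therefore the same identity holds verbatim for the barred data, giving $\langle 2\bar H(X)U,V\rangle_{\bar g}=(\mathcal{L}_X\bar g^{\mathcal{F}^{\bot}})(U,V)$.

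Next I would compute the right-hand side. Writing $\bar g^{\mathcal{F}^{\bot}}=e^{2\sigma}g^{\mathcal{F}^{\bot}}$ and using the Leibniz rule for the Lie derivative,
\begin{align*}
(\mathcal{L}_X\bar g^{\mathcal{F}^{\bot}})(U,V)=X(e^{2\sigma})\,g^{\mathcal{F}^{\bot}}(U,V)+e^{2\sigma}(\mathcal{L}_Xg^{\mathcal{F}^{\bot}})(U,V).
\end{align*}
Since $X$ is vertical and $\sigma=\pi^{*}\sigma_0$ is constant on the fibres, $X(\sigma)=0$, so the first term vanishes and $(\mathcal{L}_X\bar g^{\mathcal{F}^{\bot}})(U,V)=e^{2\sigma}(\mathcal{L}_Xg^{\mathcal{F}^{\bot}})(U,V)=e^{2\sigma}\langle 2H(X)U,V\rangle$. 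Combining this with the barred identity and with $\langle\cdot,\cdot\rangle_{\bar g}=e^{2\sigma}\langle\cdot,\cdot\rangle$ yields $e^{2\sigma}\langle 2\bar H(X)U,V\rangle=e^{2\sigma}\langle 2H(X)U,V\rangle$, whence $\langle\bar H(X)U,V\rangle=\langle H(X)U,V\rangle$ for all $U,V\in\Gamma(\mathcal{F}^{\bot})$. Nondegeneracy of $g^{\mathcal{F}^{\bot}}$ forces $\bar H(X)=H(X)$ for every vertical $X$, and the first paragraph then gives $\bar H=H$.

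The only point requiring care — which I would regard as the crux rather than an obstacle — is the cancellation $X(\sigma)=0$: this is exactly where the hypothesis $\sigma\in\pi^{*}C^{\infty}(M)$ is used, and it is what makes the purely vertical contractions $H(X)$ conformally invariant even though the full Levi-Civita and Bott connections are not. As an alternative one could follow the adiabatic-limit route suggested before the statement, extracting $H$ as the limit of the relevant block of $\nabla^{T(SM),\epsilon}$ and tracking the conformal factor through the rescaling; but since that limit again only sees vertical derivatives of the metric, it reduces to the same cancellation, so the direct argument above seems the more economical choice.
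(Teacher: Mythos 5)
Your proof is correct, and it takes a different (and more economical) route than the paper's. The paper proves the proposition via the adiabatic limit: it writes $\langle\bar H(X)U,V\rangle$ as $\lim_{\epsilon\to 0}\langle\overline{\nabla^{\mathcal{F}^{\bot},\epsilon}_X}U,V\rangle-\langle[X,U],V\rangle$, expands the first term by the Koszul formula for the Levi--Civita connection of the rescaled conformal metric $\frac{1}{\epsilon^2}e^{2\sigma}g^{\mathcal{F}^{\bot}}\oplus e^{2\sigma}g^{\mathcal{F}}$, and watches the $\epsilon$- and $\sigma$-dependent terms drop out in the limit (the terms $U\langle X,V\rangle_{\sigma,\epsilon}$ and $V\langle X,U\rangle_{\sigma,\epsilon}$ vanish identically since $X$ is vertical, $\epsilon^2\langle[U,V],X\rangle\to 0$, and $X(\sigma)=0$). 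You instead bypass the limit entirely by invoking the identity $2\langle H(X)U,V\rangle=(\mathcal{L}_Xg^{\mathcal{F}^{\bot}})(U,V)$ already recorded in the proof of Lemma \ref{lemma 4} (itself a consequence of Lemma \ref{lemma 3}(3) and the fact that the vertical part of the Bott connection depends only on the splitting, which is conformally invariant), and then apply the Leibniz rule for $\mathcal{L}_X(e^{2\sigma}g^{\mathcal{F}^{\bot}})$. Both arguments turn on exactly the same two facts --- the cancellation $X(\sigma)=0$ for vertical $X$ and the conformal invariance of $p^{\bot}$ --- and indeed the paper's computation collapses, after the limit, to precisely your Koszul expression $\frac{1}{2}\{X\langle U,V\rangle-\langle[X,U],V\rangle-\langle[X,V],U\rangle\}$. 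What the paper's route buys is consistency with the adiabatic-limit framework it has just set up in Proposition 1; what yours buys is brevity and independence from that machinery. Your reduction to vertical contractions via Lemma \ref{lemma 3}(2) matches the paper's opening remark that $\bar H(U)=H(U)=0$ for horizontal $U$, so no step is missing.
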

\begin{proof}
Let $\overline{\widetilde{\nabla}^{\mathcal{F}^{\bot}}}$ and
$\overline{\widehat{\nabla}^{\mathcal{F}^{\bot}}}$ be the
Bott connection and its symmetrization corresponding ${\bar g}^{T(SM)}=e^{2\sigma}g^{T(SM)}$, respectively.
Then the corresponding Cartan endomorphism $\bar H$ is
\begin{align*}
\bar
H&=\overline{\widehat{\nabla}^{\mathcal{F}^{\bot}}}-\overline{\widetilde{\nabla}^{\mathcal{F}^{\bot}}}.
\end{align*}

Consider the rescaled coformal metrics
$${\bar g}^{T(SM),\epsilon}=\frac{1}{\epsilon^2}e^{2\sigma}g^{\mathcal{F}^{\bot}}\oplus e^{2\sigma}g^{\mathcal{F}}$$
and the projection connections $\overline{\nabla^{{\mathcal
{F}}^{\bot},\epsilon}}$ on ${\mathcal F}^{\bot,\epsilon}$.
It is clear that ${\bar H}(U)=H(U)=0$ for any $U\in\Gamma(\mathcal{F}^{\bot})$. For any
$X\in\Gamma(\mathcal{F})$, $U,V\in\Gamma(\mathcal{F}^{\bot})$, we have
\begin{eqnarray*}
\langle{\bar
H}(X)U,V\rangle&=&\langle\overline{{\widehat\nabla}_X^{{\mathcal
F}^{\bot}}}U,V\rangle
-\langle[X,U],V\rangle\\
&=&\lim_{\epsilon\rightarrow0}\langle\overline{\nabla^{\mathcal{F}^{\bot},\epsilon}_{X}}U,V\rangle
-\langle[X,U],V\rangle\\
&=&\lim_{\epsilon\rightarrow0}\frac{1}{2}e^{-2\sigma}\epsilon^2\left\{X\langle U,V\rangle_{\sigma,\epsilon}+U\langle
X,V\rangle_{\sigma,\epsilon}-V\langle X,U\rangle_{\sigma,\epsilon}\right.\\
&&\left.+\langle[X,U],V\rangle_{\sigma,\epsilon}-\langle[X,V],U\rangle_{\sigma,\epsilon}
-\langle[U,V],X\rangle_{\sigma,\epsilon}\right\}-\langle[X,U],V\rangle\\
&=&\lim_{\epsilon\rightarrow0}\frac{1}{2}\left\{X\langle
U,V\rangle+2X(\sigma)\langle U,V\rangle\right.\\
&&\left.+\langle[X,U],V\rangle-\langle[X,V],U\rangle-\epsilon^2\langle[U,V],X\rangle\right\}-\langle[X,U],V\rangle\\
&=&\frac{1}{2}\left\{X\langle
U,V\rangle-\langle[X,U],V\rangle-\langle[X,V],U\rangle\right\}\\
&=&\langle H(X)U,V\rangle.
\end{eqnarray*}
\end{proof}

\section{Geometric classes of Finsler manifolds}

Let $(M,F)$ be an oriented and closed Finsler manifold of dimension
$n$. As in the previous section, let
$\widetilde{\nabla}^{\mathcal{F}^{\bot}}$ and
 $\widehat{\nabla}^{\mathcal{F}^{\bot}}$ denote the Chern connection and
the Cartan connection on
$\mathcal{F}^{\bot}=H(SM)$, respectively.

Let
$\nabla^{\mathcal{F}^{\bot}}_t$, $t\in[0,1]$, be a family of connections on
$\mathcal{F}^{\bot}$ defined by
\begin{equation*}
\nabla^{\mathcal{F}^{\bot}}_t=(1-t)\widetilde{\nabla}^{\mathcal{F}^{\bot}}+t\widehat{\nabla}^{\mathcal{F}^{\bot}}
=\widetilde{\nabla}^{\mathcal{F}^{\bot}}+tH.
\end{equation*}
Let $R^{\mathcal{F}^{\bot}}_t=(\nabla^{\mathcal{F}^{\bot}}_t)^2$ be the curvature
of $\nabla^{\mathcal{F}^{\bot}}_t$.
The term
\begin{equation}
-n\int_0^1{\rm tr}\left[H(R^{\mathcal{F}^{\bot}}_t)^{n-1}\right]dt\label{Chern-Simons
term}
\end{equation}
appears naturally in the transgression formula associated to ${\rm
tr}\left[(R^{\mathcal{F}^{\bot}}_t)^n\right]$ (cf. [11, p.16]).

With respect to (\ref{mathbf e}), the curvature two forms of $R^{\mathcal{F}^{\bot}}_0$
are $\Omega_j^i=d\omega_j^i-\omega_j^k\wedge\omega_k^i$.
By the first equation of (\ref{Chern connection structure eq. matrix})
(also the lemma 1.14 in \cite{Zhang}), one can write
$\Omega_j^i$ as
\begin{align}
\Omega_j^i=R_{jkl}^i\omega^k\wedge\omega^l+P_{jk\gamma}^i\omega^k\wedge\omega^{n+\gamma},\label{curvatur of Chern conncetion}
\end{align}
where $R_{jkl}^i$ and $P_{jk\gamma}^i$ are some functions on $SM$.

In the following we will compute the term (\ref{Chern-Simons term})
for a Finsler surface.
\begin{theo}\label{Chern-Simons in dim 2}
Let $(M,F)$ be an oriented and closed Finsler surface. The term
(\ref{Chern-Simons term}) is given by
\begin{equation}
-2\int_0^1{\rm tr}\left[HR^{\mathcal{F}^{\bot}}_t\right]dt=\eta\wedge d\eta,
\end{equation}
where $\eta=\tr[H]=H_{111}\omega^3$ is the Cartan form of
$(M,F)$ (cf. Remark 3).
\end{theo}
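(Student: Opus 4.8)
The plan is to exploit how severely the surface case collapses the indices, reducing the whole identity to a one-line integral over the single scalar $h:=H_{111}$. First I would record the specializations for $n=2$: the only Greek index is $\gamma=1$, the coframe of $SM$ is $\{\omega^1,\omega^2,\omega^3\}$ with $\omega^3=\omega^{n+1}$, and $\dim SM=3$. By Lemma 4 the components $H_{ij\gamma}$ vanish whenever $i=n$ or $j=n$, so the only surviving component is $H_{111}$; writing $h=H_{111}$, the Cartan endomorphism and the Cartan form become
\[
H=\begin{pmatrix} h\,\omega^3 & 0\\ 0 & 0\end{pmatrix},\qquad \eta=\tr[H]=h\,\omega^3 .
\]

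Next I would determine the Chern connection matrix $\bm{\omega}=(\omega_j^i)$ completely. Corollary 1 gives $\omega_1^2=-\omega_2^1=\omega^3$ and $\omega_2^2=0$, while the second structure equation $\bm{\omega}+\bm{\omega}^t=-2H$ forces $\omega_1^1=-h\,\omega^3$. With the connection matrix of $\nabla^{\mathcal{F}^{\bot}}_t$ equal to $\omega_t=\bm{\omega}+tH$, I would then compute the $(1,1)$-entry of its curvature directly from $(R_t)_j^i=d(\omega_t)_j^i-(\omega_t)_j^k\wedge(\omega_t)_k^i$. Its entries are $(\omega_t)_1^1=(t-1)h\,\omega^3$, $(\omega_t)_1^2=\omega^3$, $(\omega_t)_2^1=-\omega^3$ and $(\omega_t)_2^2=0$. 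The decisive simplification is that every quadratic term dies: $(\omega_t)_1^1\wedge(\omega_t)_1^1$ is proportional to $\omega^3\wedge\omega^3=0$, and $(\omega_t)_1^2\wedge(\omega_t)_2^1=\omega^3\wedge(-\omega^3)=0$. Hence
\[
(R_t)_1^1=d\big((t-1)h\,\omega^3\big)=(t-1)\,d\eta .
\]

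Since $H$ is supported in the $(1,1)$ slot, for any matrix of two-forms $M$ one has $\tr[HM]=h\,\omega^3\wedge M_1^1$; therefore $\tr[H R_t]=(t-1)\,h\,\omega^3\wedge d\eta=(t-1)\,\eta\wedge d\eta$, using $\eta=h\,\omega^3$ once more. Integrating and using $\int_0^1(t-1)\,dt=-\tfrac12$ gives
\[
-2\int_0^1\tr[H R_t]\,dt=-2\Big(-\tfrac12\Big)\eta\wedge d\eta=\eta\wedge d\eta ,
\]
which is the claim; as a sanity check one expands $\eta\wedge d\eta=h\,\omega^3\wedge(dh\wedge\omega^3+h\,d\omega^3)=h^2\,\omega^3\wedge d\omega^3$, the $dh$-term dropping out because of the repeated factor $\omega^3$.

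The only genuinely non-formal step is pinning down $\bm{\omega}$ in the surface case — in particular the non-metric diagonal entry $\omega_1^1=-h\,\omega^3$ produced by the second structure equation — so I expect that to be the main obstacle; once it is in hand, the vanishing of all wedge-squares makes $(R_t)_1^1$ linear in $t$ and reduces the transgression to the elementary integral $\int_0^1(t-1)\,dt$. Everything else is bookkeeping: keeping the index placement in $\tr[HM]=h\,\omega^3\wedge M_1^1$ consistent, and noting each time that a factor $\omega^3\wedge\omega^3$ annihilates the unwanted terms.
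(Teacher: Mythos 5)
Your proof is correct, and every step checks against the paper's own conventions: Corollary~1 does give $\omega_1^2=-\omega_2^1=\omega^3$ and $\omega_2^2=0$, the symmetry equation $\bm{\omega}+\bm{\omega}^t=-2H$ does force $\omega_1^1=-H_{111}\omega^3$, and the quadratic terms in $(R_t)_1^1$ do vanish for exactly the reasons you give. The route is recognizably the same reduction the paper uses --- everything collapses onto the single scalar $H_{111}$ and the $(1,1)$ matrix slot --- but your packaging of the $t$-dependence is genuinely different and cleaner. The paper expands $\mathrm{tr}\left[HR^{\mathcal{F}^{\bot}}_t\right]=\mathrm{tr}\left[HR^{\mathcal{F}^{\bot}}_0+tH\left[\nabla^{\mathcal{F}^{\bot}}_0,H\right]+t^2H^3\right]$, evaluates $\mathrm{tr}\left[HR^{\mathcal{F}^{\bot}}_0\right]$ and $\mathrm{tr}\left[H\left[\nabla^{\mathcal{F}^{\bot}}_0,H\right]\right]$ separately using the component expansion $d\omega^3=-R_{212}^1\omega^1\wedge\omega^2-P_{211}^1\omega^1\wedge\omega^3$, and only at the end matches the result against a separate expansion of $\eta\wedge d\eta$; the curvature coefficients $R_{212}^1$, $P_{211}^1$ appear explicitly throughout. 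You instead write down the full connection matrix of $\nabla^{\mathcal{F}^{\bot}}_t$, observe that its curvature's $(1,1)$ entry is simply $(t-1)\,d\eta$, and get $\mathrm{tr}\left[HR^{\mathcal{F}^{\bot}}_t\right]=(t-1)\,\eta\wedge d\eta$ in one stroke, so the identity with $\eta\wedge d\eta$ is manifest and no curvature components are ever needed. What your version buys is transparency and fewer opportunities for sign errors; what the paper's version buys is the explicit intermediate formula $-2\int_0^1\mathrm{tr}\left[HR^{\mathcal{F}^{\bot}}_t\right]dt=-(H_{111})^2R_{212}^1\,\omega^1\wedge\omega^2\wedge\omega^3$, which is what feeds Remark~5 (the nonvanishing of the class on Bryant's examples with $R_{212}^1=1$). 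Your sanity check $\eta\wedge d\eta=(H_{111})^2\,\omega^3\wedge d\omega^3$ recovers that formula if one substitutes the expansion of $d\omega^3$, so nothing is lost.
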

\begin{proof}
Firstly one has that
\begin{eqnarray*}
\int_0^1{\rm tr}\left[HR^{\mathcal{F}^{\bot}}_t\right]dt
&=&\int_0^1{\rm tr}\left[HR^{\mathcal{F}^{\bot}}_{0}+
tH\left[\nabla^{\mathcal{F}^{\bot}}_{0},H\right]+t^2H^3\right]dt\\
&=&{\rm tr}\left[HR^{\mathcal{F}^{\bot}}_{0}+
\frac{1}{2}H\left[\nabla^{\mathcal{F}^{\bot}}_{0},H\right]+\frac{1}{3}H^3\right].
\end{eqnarray*}
In the case of $\dim M=2$, by Corollary 1 and (\ref{curvatur of Chern conncetion}), one has
$$d\omega^3=-R_{212}^1\omega^1\wedge\omega^2-P_{211}^1\omega^1\wedge\omega^{3}.$$
With respect to the local frame (\ref{mathbf e}), one gets
\begin{equation*}
H=\left[\begin{array}{cc}H_{111}\omega^3&0\\
0&0\end{array}\right],\quad
R^{\mathcal{F}^{\bot}}_{0}=\left[\begin{array}{cc}\Omega_1^1&*\\
*&*\end{array}\right]=\left[\begin{array}{cc}H_{111}R_{212}^1\omega^1\wedge\omega^2+\cdots&*\\
*&*\end{array}\right].
\end{equation*}
Thus
$$HR^{\mathcal{F}^{\bot}}_{0}\!=\!\!\left[\begin{array}{cc}\!(H_{111})^2R_{212}^1\omega^1\wedge\omega^2\wedge\omega^3&0\\
0&0\end{array}\right],\!\quad\!\!\! H[\nabla^{\mathcal{F}^{\bot}}_{0},H]\!=\!\!\left[\begin{array}{cc}\!-(H_{111})^2R_{212}^1\omega^1\wedge\omega^2\wedge\omega^3&0\\
0&0\end{array}\right],$$ and
\begin{align*}
\int_0^1{\rm
tr}\left[-HR^{\mathcal{F}^{\bot}}_t\right]dt=\frac{1}{2}(H_{111})^2R_{212}^1\omega^1\wedge\omega^2\wedge\omega^3.
\end{align*}
On the other hand,
\begin{eqnarray*}
\eta\wedge d\eta=H_{111}\omega^3\wedge
d(H_{111}\omega^3)=-(H_{111})^2R_{212}^1\omega^1\wedge\omega^2\wedge\omega^3.
\end{eqnarray*}
So Theorem 2 follows.
\end{proof}

\begin{remark} In \cite{Szabo}, Szab\'{o} proved that any two dimensional
Berwald manifold is either locally Minkowskian or Riemannian. So the
term (\ref{Chern-Simons term}) is identically zero for any two
dimensional Berwald manifold. On the other hand, in \cite{Bryant},
Bryant constructed a family of two dimensional non-Riemannian
Finsler manifolds with $R_{212}^1=1$. From Theorem 2, the cohomology class
associated to the term (\ref{Chern-Simons term}) of these Finsler
manifolds are not zero.
\end{remark}

Motivated by Theorem \ref{Chern-Simons in dim 2} and Remark 5, we
make the following definition.
\begin{definition}
For a closed and oriented Finsler manifold $(M,F)$ of dimension $n$, the top form
$\eta\wedge(d\eta)^{n-1}$
on $SM$ is called the Chern-Simons type form of $(M,F)$. The
corresponding class $$[\eta\wedge(d\eta)^{n-1}]\in H_{\rm dR}^{2n-1}(SM)$$ is
called the Chern-Simons type secondary class of $(M,F)$.
\end{definition}
When $(d\eta)^k=0$ for some $k\geq 1$, one gets a closed form
$\eta\wedge(d\eta)^{k-1}$ and so a class
$[\eta\wedge(d\eta)^{k-1}]\in H_{\rm dR}^{2k-1}(SM)$.  It would be
interesting to explore the properties of the Finsler manifolds with
$(d\eta)^k=0$ and $[\eta\wedge(d\eta)^{k-1}]\neq 0$.

Note that the form $\eta\wedge(d\eta)^{n-1}$ is unchanged about
the conformal metrics in Proposition \ref{H under
conformal change}. In the following proposition, a condition on conformal Finsler metrics
is given which leaves $\eta$ unchanged.

\begin{prop}
Let $(M,F)$ be a Finsler manifold. Let $\bar{F}=e^{\sigma}F$ be a
conformal deformation of $F$, where $\sigma\in\pi^*C^{\infty}(M)$.
Let $\eta$ and $\bar{\eta}$ be the Cartan forms of $(M,F)$ and
$(M,\bar{F})$, respectively. Then $\bar{\eta}=\eta$ if and only if
$\sigma$ satisfies
\begin{align}
\mathbf{G}(\sigma)\mathbf{I}+\mathbf{A}(\mathbf{I}^*,d\sigma^*)=0\quad{\text
and}\quad \langle \mathbf{I}^*,d\sigma^*\rangle=0,
                           \label{conditions for eta invariant under c.t.}
\end{align}
where $\mathbf{G}=\frac{y^i}{F}\frac{\delta}{\delta x^i}$ is the
Reeb vector field on $SM$; $\mathbf{A}$ is the Cartan tensor and
$\mathbf{I}$ is the usual Cartan form (cf. Remark 3);
$\mathbf{I}^*$, $d\sigma^*$ are the dual vector fields of
$\mathbf{I}$, $d\sigma$ with respect to the metric $g^{T(SM)}$,
respectively.
\end{prop}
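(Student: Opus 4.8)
The plan is to first convert the intrinsic trace $\eta=\tr[H]$ into a concrete formula that is manifestly a functional of $F$ alone, and then simply re-read that formula for $\bar F=e^\sigma F$. Starting from Lemma \ref{lemma 4}, namely $H_{ij\gamma}=-A_{pqk}u_i^pu_j^qu_\gamma^k$, I would take the trace and use $\sum_i u_i^pu_i^q=g^{pq}$ (the inverse of $g_{ij}=\sum_k v_i^kv_j^k$) to get $\tr[H]=-A_ku_\gamma^k\,\omega^{n+\gamma}$ with $A_k=g^{pq}A_{pqk}$ the mean Cartan tensor. Substituting $\omega^{n+\gamma}=-v_i^\gamma\delta y^i$ and collapsing the partial sum $\sum_\gamma u_\gamma^kv_i^\gamma=\delta_i^k-\tfrac{y^k}{F}F_{y^i}$, the term proportional to $F_{y^i}\delta y^i$ drops out because $A_ky^k=0$ (Euler's relation, since $[F^2]_{y^iy^jy^k}y^k=0$). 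This yields the clean invariant expression $\eta=A_k\,\delta y^k$, which I regard as the key reduction of the whole argument.

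Since $SM$ is literally unchanged under $\bar F=e^\sigma F$ (the fibres are rays, and $\sigma\in\pi^*C^\infty(M)$ is $y$-independent), the same formula reads $\bar\eta=\bar A_k\,\bar\delta y^k$. The conformal weights then almost cancel: $\bar g_{ij}=e^{2\sigma}g_{ij}$ forces $\bar A_{ijk}=e^{3\sigma}A_{ijk}$ and $\bar A_k=e^\sigma A_k$, while $\bar F=e^\sigma F$ and $\bar\delta y^k=\tfrac1{\bar F}(dy^k+\tfrac{\partial\bar G^k}{\partial y^j}dx^j)$, so that $\bar\eta-\eta=\tfrac{A_k}{F}\,\tfrac{\partial}{\partial y^j}(\bar G^k-G^k)\,dx^j$. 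Computing the change in the spray coefficients from $G^i=\tfrac14 g^{ij}([F^2]_{y^jx^k}y^k-[F^2]_{x^j})$ and using $g^{ij}FF_{y^j}=y^i$, I expect $\Delta^k:=\bar G^k-G^k=\sigma_0\,y^k-\tfrac12 F^2\sigma^k$, where $\sigma_0=y^i\sigma_{x^i}$ and $\sigma^k=g^{kl}\sigma_{x^l}$.

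The remaining step is to differentiate $\Delta^k$ in $y^j$ and contract with $A_k$. Here $A_ky^k=0$ kills two terms, $[F^2]_{y^j}=2FF_{y^j}$ produces the Hilbert form, and the one genuinely delicate ingredient is $\tfrac{\partial\sigma^k}{\partial y^j}=\tfrac{\partial g^{kl}}{\partial y^j}\sigma_{x^l}$, which through $\tfrac{\partial g^{kl}}{\partial y^j}=-\tfrac{2}{F}g^{ka}g^{lb}A_{abj}$ feeds the Cartan tensor back in. Recognising $A_ku_n^k=0$ shows that $\mathbf{I}=A_kdx^k$ and $\mathbf{A}(\mathbf{I}^*,d\sigma^*)=A_{abj}A^a\sigma^b\,dx^j$ have no Hilbert-form component, and that $\mathbf{G}(\sigma)=\sigma_0/F$ together with $\langle\mathbf{I}^*,d\sigma^*\rangle=A_k\sigma^k$ (as $d\sigma$ is horizontal). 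Assembling everything I anticipate the master identity
\[
\bar\eta-\eta=\mathbf{G}(\sigma)\,\mathbf{I}+\mathbf{A}(\mathbf{I}^*,d\sigma^*)-\langle\mathbf{I}^*,d\sigma^*\rangle\,\omega^n .
\]

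To finish I would invoke linear independence: both $\mathbf{I}$ and $\mathbf{A}(\mathbf{I}^*,d\sigma^*)$ annihilate the Reeb field $\mathbf{G}$ (because $A_ky^k=0$ and $A_{abj}y^j=0$), hence lie in $\mathrm{span}\{\omega^1,\dots,\omega^{n-1}\}$, whereas the last term is a multiple of $\omega^n$. Thus $\bar\eta-\eta=0$ if and only if these two pieces vanish separately, i.e. $\mathbf{G}(\sigma)\mathbf{I}+\mathbf{A}(\mathbf{I}^*,d\sigma^*)=0$ together with $\langle\mathbf{I}^*,d\sigma^*\rangle=0$, which is exactly \eqref{conditions for eta invariant under c.t.}. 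The main obstacle is the bookkeeping of the third paragraph: correctly computing $\Delta^k$ and, above all, handling $\partial_{y^j}g^{kl}$ so that the Cartan-tensor term emerges with the right coefficient; once the master identity is established the stated equivalence is immediate.
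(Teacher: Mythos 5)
Your proposal is correct and follows essentially the same route as the paper: both hinge on computing $\bar G^i-G^i=\sigma_0y^i-\tfrac12F^2\sigma^i$, differentiating in $y$ (where $\partial_{y^j}g^{ki}$ is exactly what produces the Cartan-tensor term), and reading off the master identity $\bar\eta-\eta=\mathbf{G}(\sigma)\mathbf{I}+\mathbf{A}(\mathbf{I}^*,d\sigma^*)-\langle\mathbf{I}^*,d\sigma^*\rangle\,\omega^n$. Your preliminary reduction $\eta=A_k\,\delta y^k$ merely repackages the paper's combination of the conformal invariance of $H_{ij\gamma}$ with the expansion of $\overline{\omega^{n+\gamma}}$, and your explicit linear-independence argument separating the $\omega^n$-component for the ``if and only if'' is a small completeness improvement over the paper, which leaves that last step implicit.
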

\begin{proof}
By (\ref{geodesic coeffi.}), one has
$\overline{G}^{i}=G^i+\sigma_{x^k}y^ky^i-\frac{1}{2}F^2\sigma_{x^k}g^{ki}$.
Furthermore,
\begin{align*}
\frac{\partial \overline{G}^{i}}{\partial y^j}=\frac{\partial
G^{i}}{\partial
y^j}+\sigma_{x^j}y^i+\sigma_{x^k}y^k\delta_j^i-FF_{y^j}\sigma_{x^k}g^{ki}+FA_{pqj}g^{ip}g^{qk}\sigma_{x^k},
\end{align*}
and
\begin{align*}
\overline{\delta
y^i}&=\frac{1}{\bar{F}}\left(dy^i+\frac{\partial \overline{G}^{i}}{\partial y^j}dx^j\right)\\
&=e^{-\sigma}\frac{1}{F}\left[dy^i+\left(\frac{\partial
G^{i}}{\partial
y^j}+\sigma_{x^j}y^j+\sigma_{x^k}y^k\delta_j^i-FF_{y^j}\sigma_{x^k}g^{ki}+FA_{pqj}g^{ip}g^{qk}\sigma_{x^k}\right)dx^j\right]\\
&=e^{-\sigma}\delta
y^i+e^{-\sigma}\frac{1}{F}\left(\sigma_{x^j}y^i+\sigma_{x^k}y^k\delta_j^i-FF_{y^j}\sigma_{x^k}g^{ki}+FA_{pqj}g^{ip}g^{qk}\sigma_{x^k}\right)\delta
x^j.
\end{align*}
Corresponding to $\bar{F}$, one has that
$\overline{\omega^i}=e^{\sigma}\omega^i$ and
$\overline{\omega^{n+\gamma}}=-e^{\sigma}v_j^i\overline{\delta
y^j}$. Now,
\begin{align*}
-\overline{\omega^{n+\gamma}}&=v_j^{\gamma}\delta
y^j+\frac{1}{F}v_j^{\gamma}(\sigma_{x^k}y^j+\sigma_{x^l}y^l\delta_k^j-FF_{y^k}\sigma_{x^l}g^{lj}+FA_{pqk}g^{jp}g^{ql}\sigma_{x^l})\delta x^k\\
&=-\omega^{n+\gamma}+\frac{1}{F}\sigma_{x^l}y^lv_k^{\gamma}\delta x^k-g^{lj}v_j^{\gamma}\sigma_{x^l}F_{y^k}\delta x^k+v_j^{\gamma}A_{pqk}g^{jp}g^{ql}\sigma_{x^l}\delta x^k\\
&=-\omega^{n+\gamma}+\frac{1}{F}\sigma_{x^l}y^l\omega^{\gamma}+v_j^{\gamma}A_{pqk}g^{jp}g^{ql}\sigma_{x^l}\delta
x^k-g^{lj}v_j^{\gamma}\sigma_{x^l}\omega^n.
\end{align*}
On the other hand, one sees easily from (\ref{H under natrual basis})
that functions
$H_{\alpha\beta\gamma}$ are unchanged under the above conformal
deformations. Finally we obtain
\begin{align*}
\bar{\eta}&=\bar{H}_{ii\gamma}\overline{\omega^{n+\gamma}}\\
&=H_{ii\gamma}\omega^{n+\gamma}-\frac{1}{F}\sigma_{x^l}y^lH_{ii\gamma}\omega^{\gamma}-H_{ii\gamma}v_j^{\gamma}A_{pqk}g^{jp}g^{ql}\sigma_{x^l}\delta x^k+H_{ii\gamma}v_j^{\gamma}g^{lj}\sigma_{x^l}\omega^n\\
&=\eta+\frac{y^l}{F}\sigma_{x^l}\mathbf{I}+A_jg^{jp}A_{pqk}g^{ql}\sigma_{x^l}\delta x^k-A_jg^{jl}\sigma_{x^l}\omega^n\\
&=\eta+\mathbf{G}(\sigma)\mathbf{I}+\mathbf{A}(\mathbf{I}^*,d\sigma^*)-\langle\mathbf{I}^*,d\sigma^*\rangle\omega^n.
\end{align*}
\end{proof}

%&=\eta+\frac{y^l}{F}\sigma_{x^l}\mathbf{I}+A_jg^{jp}A_{pqk}g^{ql}\sigma_{x^l}\delta x^k-A_jg^{jl}\sigma_{x^l}\omega^n\\
%&=\eta+\mathbf{G}(\sigma)\mathbf{I}+\mathbf{A}(\mathbf{I}^*,d\sigma^*)-\langle\mathbf{I}^*,d\sigma^*\rangle\omega^n.

By the above proposition, the Chern-Simons type form
$\eta\wedge(d\eta)^{n-1}$ is a conformal invariant when the
conformal factor $\sigma$ satisfies (\ref{conditions for eta
invariant under c.t.}).

It should be noted that the second equation in (\ref{conditions for
eta invariant under c.t.}) also appears as the conformal invariance
condition of the so called S-curvature (cf. [2, p.231]).


\begin{thebibliography}{99}
\bibitem{AP} Abate, M., Patrizio, G., \textsl{Finsler Metrics- A Global Approach.} LNM 1591,
Springer-Verlag, Berlin Heidelberg, 1994.

\bibitem{BaCh} B\'{a}cs\'{o}, S. and Cheng, X.,
\textsl{Finsler conformal transformations and the curvature invariances.} Publ. Math. Debrecen, 70/1-2(2007).

\bibitem{BaoChernShen}  Bao, D., Chern, S. S. and Shen, Z.,
\textsl{An Introduction to Riemann-Finsler Geometry}. Graduate Texts in Mathematics, Vol. 200,
Springer-Verlag, New York, Inc., 2000.

\bibitem{BisZ} Bismut, J-M. and Zhang, W., \textsl{An Extension of a theorem by Cheeger and M$\ddot{u}$ller.} Ast$\acute{\rm e}$risque, 205,
Soc. Math. Franch, Paris, 1992.


\bibitem{Bryant}  Bryant, R. L., \textsl{Projectively flat Finsler 2-spheres of constant curvature}. Selecta Math. (New Series), 3(1997), 161-203.



\bibitem{CCL}  Chern, S. S., Chen, W. and K. Lam, \textsl{Lectures on Differential Geometry}. Series on University Mathematics, Vol. 1, World Scientific Publishing Co. Pte. Ltd., 2000.

\bibitem{LiuZ} Liu, K. and Zhang, W., \textsl{Adiabatic limits and foliations.} The Milgram Festschrift. Eds. A. Adem et. al., Contemp. Math., 279(2001), 195-208.

\bibitem{Mo} Mo, X., \textsl{An Introduction to Finsler Geometry}. Peking University series in Math., Vol. 1, World Scientific Publishing Co. Pte. Ltd., 2006.

\bibitem{Mo2011} Mo, X., \textsl{A new characterization of Finsler metrics with constant flag curvatrure 1}. Front. Math. China, 6(2)(2011), 309-323.


\bibitem{Szabo} Szab\'{o}, Z., \textsl{Positive definite Berwald spaces (structure theorems on Berwald
spaces).} Tensor, N. S., 35(1981), 25-39.

\bibitem{Zhang} Zhang, W., \textsl{Lectures on Chern-Weil Theory and Witten Deformations}. Nankai Tracts in Mathematics, Vol. 4, World Scientific Publishing Co. Pte. Ltd., 2001.


\end{thebibliography}
\end{document}